\numberwithin{equation}{section}
\newtheorem{theorem}{Theorem}[section]
\newtheorem{lemma}{Lemma}[section] 
\begin{document}

\title[Brezis-Nirenberg type problem for fractional
sub-Laplacian]{Brezis-Nirenberg type problem for fractional
sub-Laplacian on the Heisenberg group}

%%%%% authors:
\author[V. Y. Naik and G. Dwivedi]{
        Vikram Yallapa Naik$^1$ %%% 1 %%% V. Kiryakova %% ORCID ID: 0000-0002-4591-6958 %% pls. add IF available %%
\and
        Gaurav Dwivedi$^2$ %%% 2 %%% S. Kiryakov %% ORCID ID: N/A %%%
 }
\address{Department of Mathematics, Birla Institute of Technology and Science Pilani, Pilani Campus, Pilani-333031 (Rajasthan), India}
\email{$^1$vikramnaik154@gmail.com, $^2$gaurav.dwivedi@pilani.bits-pilani.ac.in}

%%%%%%%%%%%%%%%%%%%%%%%%%%%%%%%%%%%%%%%%%%%%%%%%%%%%%%
\begin{abstract}
% {\color{blue}
% Text of the abstract. \dots }
In this paper, we show the existence of a weak solution for a fractional
 sub-Laplace equation involving a term with the critical Sobolev exponent, namely,  
\begin{align*}
    (-\Delta_\mathbb{H})^su - \lambda u &= |u|^{Q^*_s -2}u \text{ in } \Omega, \\
    u &= 0 \text{ in } \mathbb{H}^N \setminus \Omega,  
\end{align*}
where $\Omega \subseteq \mathbb{H}^N$ is bounded and has continuous boundary, $(-\Delta_\mathbb{H})^s$ is the horizontal fractional Laplacian, $s \in (0,1), \lambda > 0,$ and $Q^*_s=\frac{2Q}{Q-2s}$ is the Sobolev critical exponent. This problem is motivated by the celebrated Brezis-Nirenberg problem \cite{brezis1983positive}.

\end{abstract} %%%%%%%%%%%%%
\subjclass[2020]{35R03 ,35R11 ,35H20}
\keywords{Fractional sub-laplacian, critical exponent, Heisenberg group,  Brezis-Nirenberg problem.}
\maketitle
%%%%%%%% begin papers' body %%%%%%%%%%%%%%%%%%%%%%%%%%%%%

%%%%%%%%%%%% Section 1 %%%%%%%%%%%%%%%%%%%%%%%%%%
\section{Introduction} \label{sec:1}

\setcounter{section}{1} \setcounter{equation}{0} %% to have proper 2-digits numbers of eqs
%% Note that this style produces 1-digit numbering of definitons, statements, exmaples, etc.

In this article, we consider the problem  
\begin{align}\label{eqn1.1}
\begin{split}
    (-\Delta_\mathbb{H})^su - \lambda u &= |u|^{Q^*_s -2}u \text{ in } \Omega, \\
    u &= 0 \text{ in } \mathbb{H}^N \setminus \Omega,
\end{split}    
\end{align}
where $\Omega \subseteq \mathbb{H}^N$ is bounded and has continuous boundary, $(-\Delta_\mathbb{H})^s$ is the horizontal fractional Laplacian, $s \in (0,1), \lambda > 0,$ and $Q^*_s=\frac{2Q}{Q-2s}$ is the critical Sobolev exponent.

The Problem \eqref{eqn1.1} is commonly referred to as the Brezis-Nirenberg problem, named after the celebrated article of Brezis and Nirenberg \cite{brezis1983positive}, where the authors proved the existence of positive solutions for the problem
\begin{align} \label{eqn1.2}
\begin{split}
    -\Delta u - \lambda u &= |u|^{2^* -2}u \text{ in } \Omega, \\
    u &= 0 \text{ on } \partial \Omega.
\end{split} 
\end{align}
Here $\Omega \subset \mathbb{R}^N$ is bounded, $\lambda > 0,$ and $2^*=\frac{2N}{N-2}$ is the critical Sobolev exponent. Brezis and Nirenberg proved the following: 
\begin{enumerate}
    \item[(i)] If $N\geq 4,$ the Problem \eqref{eqn1.2} 
 has a positive solution for every \( \lambda \in (0, \lambda_1) \), where \(\lambda_1\) denotes the first eigenvalue of \(-\Delta\) with Dirichlet boundary condition; moreover, it has no solution if $\lambda \leq 0$ and \( \Omega \) is star-shaped.
\item[(ii)] If \( N = 3 \) and   \( \Omega \) is a ball, then \eqref{eqn1.2}  has a solution if and only if \( \lambda \in \left(\frac{1}{4}\lambda_1, \lambda_1\right) \).
\end{enumerate}

The equations of the type \eqref{eqn1.1} in the case of fractional Laplacian $(-\Delta )^s$ in the Euclidean domain $\mathbb{R}^N$ were first studied by Servadei and Valdinoci \cite{servadei2015brezis}. They proved that the problem 
\begin{align}\label{frac_model}
\begin{split}
    (-\Delta)^su - \lambda u &= |u|^{2^*_s -2}u \text{ in } \Omega\subseteq \mathbb{R}^N, \\
    u &= 0 \text{ in } \mathbb{R}^N \setminus \Omega,
\end{split}    
\end{align}
admits a non-trivial solution for any $\lambda\in (0, \lambda_{1,s})$, provided $N \geq 4s,$ where $\lambda_{1,s}$ is the first eigenvalue of the fractional Laplacian with nonlocal Dirichlet boundary condition. The results of \cite{servadei2015brezis}  were further extended to fractional $p$-Laplacian by Mosconi et al. \cite{mosconi2016brezis}, and Mahwin and Molica Bisci \cite{mawhin2017brezis}.  For some further existence results involving critical fractional elliptic problems, we refer to \cite{barrios2012some,barrios2015critical,bonder2018concentration,brasco2018optimal,cantizano2019three,mukherjee2016fractional,pucci2023brezis,ros2015nonexistence,shang2014positive,xiang2016existence}. 

% In \cite{capozzi1985existence}, Capozzi et al. have proved existence results for $N\geq4$ and any $\lambda > 0.$ In \cite{schechter2010brezis}, Schechter et al. have proved the existence of infinitely many sign-changing solutions except zero for \eqref{eqn1.2} for $N \geq 7$ if $\lambda \geq \lambda_1,$ using Morse theory. 
 
% In \cite{gao2018brezis}, Gao et al. have proved the existence results for the Choquard-type equation
% \begin{align*}
% -\Delta u &= \left(\int\limits_{\Omega} \frac{|u|^{2^*_\alpha}}{|x - y|^{\alpha}} dy \right)  |u|^{2^*_\alpha - 2} u + \lambda u \text{ in } \Omega, \\
%     u &= 0 \text{ on } \partial \Omega,
% \end{align*}
% where, $\Omega \subset \mathbb{R}^N$ is bounded, $\lambda > 0, \alpha \in (0,N)$ and $2^*_\alpha=\frac{2N-\alpha}{N-2}$ is the critical exponent with respect to Hardy-Littlewood-Sobolev inequality and Sobolev embedding. In this article, they have proved the existence of a solution for all $\lambda > 0.$

% In \cite{servadei2015brezis}, the authors have proved the existence of the non-local counterpart, i.e. fractional version of \eqref{eqn1.2}. In \cite{mukherjee2016fractional}, authors have dealt with the Brezis-Nirenberg type problem for the Choquard type nonlinearity. They have proved the existence result for all $\lambda > 0.$

Brezis-Nirenberg problem -- for non-fractional case -- has also been studied in non-Euclidean settings such as the Riemannian manifold \cite{bandle2002brezis}, Grushin manifold \cite{alves2024brezis}, Heisenberg group \cite{citti1995semilinear,goel2020existence}, hyperbolic spaces \cite{carriao2019brezis}, Carnot group \cite{loiudice2007semilinear,loiudice2015critical} etc. Y. Han~\cite{han2020integral} studied the Brezis–Nirenberg problem involving an integral operator on the Heisenberg group.
% In \cite{goel2020existence}, Goel et al. have proved the existence result for the Brezis-Nirenberg problem with the Choquard term in this setting for all $\lambda > 0.$

Recently, there has been noticeable work done on the problems involving the fractional sub-Laplace operator on the Heisenberg group. Mallick et al. \citep{mallick2018hardy} derived the Hardy inequality for fractional Sobolev spaces on the Heisenberg group and further proved Sobolev and Morrey type embeddings in this setting. Wang et al. \citep{wang2020properties} established the symmetry, 
monotonicity and Liouville property of solutions to  fractional $p$-subLaplace equations on the Heisenberg group. Zhou et al. \citep{zhou2022kirchhoff} proved the existence of weak solution to the fractional Kirchhoff type problem  with subcritical nonlinearity on the Heisenberg group. For some regularity results on problems involving fractional p-subLaplace operator, we refer to \cite{manfredini2023holder,palatucci2023nonlinear}.  Piccinini \citep{piccinini2022obstacle} obtained the existence and uniqueness results the obstacle problem for fractional p-subLaplace operator and  also discussed Perron Method for nonlinear fractional equations in the Heisenberg group. Ghosh et al. \citep{ghosh2024compact} studied an eigenvalue problem for the fractional $p$-sub-Laplacian over the fractional Folland–Stein–Sobolev spaces on stratified Lie groups and discussed a singular Brezis-Nirenberg type problem for fractional sub-Laplacian. Ghosh et al.~\cite{ghosh2024subelliptic} also studied the Brezis-Nirenberg problem p-sub-Laplacian on stratified Lie groups.

In view of recent developments on the fractional Laplacian on the Heisenberg group, we look to extend the results in \cite{servadei2015brezis} in the Heisenberg group setting. The main result of this article is as follows:
\begin{theorem}\label{thm1}
    Let $ \lambda_1$ be the first eigenvalue of the fractional sub-Laplace operator defined in \eqref{eigen}. Then there exists a non-trivial solution of \eqref{eqn1.1} if $0 < \lambda < \lambda_1.$  
\end{theorem}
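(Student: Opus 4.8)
The plan is to obtain the nontrivial solution as a mountain-pass critical point of the energy functional
\[
J_\lambda(u) = \frac12\,[u]_s^2 - \frac{\lambda}{2}\int_\Omega u^2\,d\xi - \frac{1}{Q^*_s}\int_\Omega |u|^{Q^*_s}\,d\xi,
\]
defined on the Folland--Stein--Sobolev space $X_0$ of functions on $\mathbb{H}^N$ vanishing outside $\Omega$, where $[u]_s^2$ denotes the Gagliardo-type seminorm associated with $(-\Delta_{\mathbb H})^s$. Critical points of $J_\lambda$ are precisely the weak solutions of \eqref{eqn1.1}, so it suffices to produce a nonzero one.

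First I would verify the mountain-pass geometry. Since $0 < \lambda < \lambda_1$, the variational characterisation of $\lambda_1$ gives $[u]_s^2 - \lambda\int_\Omega u^2 \geq (1 - \lambda/\lambda_1)\,[u]_s^2$, so the quadratic part of $J_\lambda$ is positive definite; combined with the fractional Folland--Stein inequality $[u]_s^2 \geq S\,\|u\|_{Q^*_s}^2$ (with $S$ the best Sobolev constant on $\mathbb H^N$) to control the critical term, this produces a sphere $\{\|u\|=\rho\}$ on which $J_\lambda \geq \beta > 0$. Because $Q^*_s > 2$, fixing any $u_0\neq 0$ one has $J_\lambda(t u_0)\to -\infty$ as $t\to\infty$, which supplies a point $e$ with $J_\lambda(e)<0$. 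Hence the minimax level $c = \inf_{\gamma}\max_{t}J_\lambda(\gamma(t)) \geq \beta > 0$ is well defined.

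The core difficulty is the loss of compactness caused by the critical exponent, which makes $J_\lambda$ fail the Palais--Smale condition globally. The remedy I would follow is to show that $(PS)_c$ holds for every level strictly below the threshold
\[
c < \frac{s}{Q}\,S^{Q/(2s)} .
\]
Concretely, one shows any $(PS)_c$ sequence is bounded, extracts a weak limit, and---via a Brezis--Lieb splitting together with the Sobolev inequality---rules out concentration into ``bubbles,'' each of which would carry energy at least $\tfrac{s}{Q}S^{Q/(2s)}$; below the threshold no bubbling can occur and strong convergence follows. The decisive step, and the one I expect to be the main obstacle, is the strict energy estimate $c < \tfrac{s}{Q}S^{Q/(2s)}$. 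Following Brezis--Nirenberg and Servadei--Valdinoci, I would insert into $J_\lambda$ a rescaled, cut-off family $u_\varepsilon$ built from the extremal functions of the Folland--Stein--Sobolev inequality (which concentrate at a point of $\Omega$ as $\varepsilon\to 0$) and estimate $\max_{t\geq 0}J_\lambda(t u_\varepsilon)$. The gain comes precisely from the linear term: since $\lambda>0$, the quantity $\lambda\int_\Omega u_\varepsilon^2$ is, to leading order, strictly larger than the error introduced by truncating the bubble, so for $\varepsilon$ small the maximum drops strictly below the threshold. Because the homogeneous dimension $Q = 2N+2 \geq 4 > 4s$ for every $s\in(0,1)$, the relevant asymptotics of $\int u_\varepsilon^2$ always dominate the truncation error, which is why no dimension hypothesis is needed in the statement; carrying out these asymptotic expansions of the truncated Heisenberg bubbles is the technical heart of the proof.

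With both ingredients established, the Mountain Pass Theorem yields a critical point $u$ at level $c$; since $c \geq \beta > 0 = J_\lambda(0)$, this $u$ is nontrivial and is the desired weak solution of \eqref{eqn1.1}.
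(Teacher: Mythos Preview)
Your proposal is correct but follows a different variational route from the paper. The paper adopts the original Brezis--Nirenberg direct-minimization strategy rather than the mountain pass: it defines the constrained infimum
\[
S_{s,\lambda}=\inf\Bigl\{[u]^2-\lambda\|u\|_{L^2}^2 : u\in X_0,\ \|u\|_{L^{Q^*_s}}=1\Bigr\},
\]
proves the strict inequality $S_{s,\lambda}<S_s$ via the truncated-bubble estimates (exactly the asymptotics you identify as the technical heart), and then shows---by a Brezis--Lieb splitting of a minimizing sequence---that this strict inequality forces strong convergence, so the infimum is attained. The minimizer satisfies an Euler--Lagrange equation with multiplier $S_{s,\lambda}>0$ (here $\lambda<\lambda_1$ is used), and a scalar multiple of it solves \eqref{eqn1.1}. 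Your scheme instead runs the Mountain Pass Theorem on the free functional, establishing $(PS)_c$ below $\tfrac{s}{Q}S^{Q/(2s)}$ and then pushing the minimax level beneath that threshold with the same bubbles. The bubble computations are identical in both approaches; what differs is how compactness is recovered. The paper's route is shorter and sidesteps the explicit $(PS)_c$ analysis, while your mountain-pass framework is more flexible for extensions (higher eigenvalue ranges, sign-changing solutions, or additional perturbations).
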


The plan of this paper is as follows: In Section 2, we introduce some well-known definitions and results, and in Section 3, we provide proof of Theorem \ref{thm1}.

%%%%%%%%%%%%%%% Section 2 %%%%%%%%%%%%%%%%%%%%%%%
 \section{Preliminaries and Functional Space} \label{sec:2}

\setcounter{section}{2} \setcounter{equation}{0} %% to have proper 2-digits numbers of eqs
%% Note that this style produces 1-digit numbering of definitons, statements, exmaples, etc.

We briefly introduce the necessary terminologies in the Heisenberg group. The Heisenberg group is denoted by $\mathbb{H}^N=\left( \mathbb{R}^{2N+1}, \circ \right),$ where `$\circ$' denotes the group operation  defined as  
$$
    \xi\circ\xi'=(x+x',y+y',t+t'+2(x'\cdot y-y'\cdot x)),
$$
    for every $\xi=(x,y,t),\xi '=(x',y',t')\in \mathbb{H}^N.$

% $\xi^{-1}=-\xi$ is the inverse, and therefore
% $(\xi')^{-1}\circ {\xi}^{-1}=(\xi\circ\xi')^{-1} .$

The natural group of dilations on $\mathbb{H}^N$ is defined as
$\delta_s(\xi)=(sx,sy,s^2t),$ for every $s>0.$ Hence,
$\delta_s(\xi'\circ\xi) = \delta_s(\xi')\circ \delta_s(\xi)$ and $\delta_{s'}\delta_s(\xi)= \delta_{s's}(\xi), s, s'>0.$ It
can be seen that the Jacobian determinant of dilations
$\delta_s: \mathbb{H}^N\to \mathbb{H}^N$ is constant and equal to
$s^Q,$ for every $\xi=(x,y,t)\in \mathbb{H}^N.$ The natural number $Q=2N+2$ is
called the homogeneous dimension of $\mathbb{H}^N.$
The left invariant vector fields on $\mathbb{H}^N$ are given by
$$
T=\frac{\partial}{\partial t},  \
X_{j}=\frac{\partial}{\partial
x_{j}}+2y_{j}\frac{\partial}{\partial t},  \
Y_{j}=\frac{\partial}{\partial y_{j}}-2x_{j}\frac{\partial}{\partial
t}, \quad j=1,\ldots,N.
$$
The horizontal gradient is given by $$\nabla_\mathbb{H} u(\xi)=(X_{1}u(\xi),X_{2}u(\xi),\cdots,X_{N}u(\xi),Y_{1}u(\xi),Y_{2}u(\xi),\cdots,Y_{N}u(\xi)).$$ 
The subelliptic Laplacian on $\mathbb{H}^N$ is defined as $$\Delta_{\mathbb{H}}u(\xi)=\sum\limits_{j=1}^{N}X_{j}^{2}u(\xi)+Y_{j}^{2}u(\xi).$$
The homogeneous norm on $\mathbb{H}^N$ is defined as
$$|\xi|=|\xi|_{\mathbb{H}}=\left[(|x|^2+|y|^2)^2+t^2\right]^{\frac{1}{4}},  \  \mbox{for every}  \   \xi\in \mathbb{H}^N.$$ The homogeneous degree of the norm is $1,$ in terms of dilations.
In general, a homogeneous norm does not satisfy the triangle inequality. If $d_0$ is any homogeneous norm then, $d_0(\xi \circ \eta) \leq \tilde{c} (d_0(\xi) + d_0(\eta)),$ for $\xi, \eta \in \mathbb{H}^N.$  But when the homogeneous norm $d_0$ is \( |\cdot|_{\mathbb{H}} \), the constant \( \tilde{c} \) can be chosen equal to 1. For further details, we refer to \cite[Remark 2.5]{manfredini2023holder}.

The $\mathbb{R}^{2N+1}-$Lebesgue measure is the Haar measure on $\mathbb{H}^N$, which is invariant under the left translations of $\mathbb{H}^N$ and is $Q$-homogeneous in terms of dilations, where $Q=2N+2.$
Hence, the topological dimension $2N + 1$ of $\mathbb{H}^{N}$ is strictly less than its Hausdorff
dimension $Q.$ Next, $|\Omega|$ denotes the
$(2N + 1)$-dimensional Lebesgue measure of any measurable set $\Omega\subseteq
\mathbb{H}^{N}.$ Thus,
$$|\delta_{s}(\Omega)|=s^{Q}|\Omega|, s > 0,  \   d(\delta_{s}\xi)=s^{Q}d\xi,$$
and
$$
|B_r(\xi_{0})|=\alpha_{Q}r^{Q},  \  \mbox{where}  \   \alpha_{Q}=|B_1(0)|.$$
Here $B(\xi_{0},r)$ denotes the ball in $\mathbb{H}^N$ centred at $\xi_{0}$ with radius $r.$

The fractional sub-Laplacian is defined as
\begin{equation}\tag{P} \label{P}
     (-\Delta_{\mathbb{H}})^s u(\xi)  := C(N, s) \text{PV} \int\limits_{\mathbb{H}^N} \frac{ u(\xi) - u(\eta)}{|\eta^{-1} \circ \xi|^{Q+2s}}  d\eta, \quad \xi \in \mathbb{H}^N,
\end{equation} 
where PV denotes principal value.

Integro-differential operators in the form of \eqref{P}  arise as a generalization of the classical conformally invariant fractional sub-Laplacian $\left( -\Delta_{\mathbb{H}} \right)^s$ on $\mathbb{H}^N$, which was first introduced in \cite{branson2013moser} via the spectral formula
\[
\left( -\Delta_{\mathbb{H}} \right)^s = 2^s  |T|^s \frac{\Gamma\left(-\frac{1}{2} \Delta_{\mathbb{H}}|T|^{-1} +\frac{1+s}{2} \right)}{\Gamma\left(-\frac{1}{2} \Delta_{\mathbb{H}}|T|^{-1} +\frac{1-s}{2} \right)}\quad s \in (0, 1),
\]
with $\Gamma(x) := \int\limits_0^1 t^{x-1}e^{-t} dt$ being the Euler Gamma function, $\Delta_{\mathbb{H}}$ is the sub-Laplacian in $\mathbb{H}^N$, and $T = \partial_t$ is the vertical vector field. In \cite{roncal2016hardy}, it has been proved that \eqref{P} is the representation formula for the above spectral formula.

The fractional horizontal Sobolev space for a fixed $s \in (0,1)$ is defined as,
$$
    HW^{s,2}(\Omega)= \left\{ u \in L^2(\Omega) : \frac{|u(\xi)-u(\eta)|}{\left|\eta^{-1} \circ \xi \right|^{\frac{Q}{2} +s}} \in L^2(\Omega \times \Omega)\right\}.
$$
The norm in this space is given by,
$$
    \|u\|_{HW^{s,2}(\Omega)}=\left(\|u\|_{L^2(\Omega)}^2 + [u]^2_{HW^{s,2}(\Omega)} \right)^{\frac{1}{2}},
$$
where $[u]^2_{HW^{s,2}(\Omega)}=\iint\limits_{\Omega \ \Omega}\frac{|u(\eta) - u(\xi)|^2}{\left|\eta^{-1} \circ \xi \right|^{Q+2s}} \ d \xi d\eta,$ $\|u\|_{L^p(\Omega)}=\left(\int\limits_\Omega |u(\xi)|^p d\xi\right)^{\frac{1}{p}}.$ Let $\mathcal{O} = \Omega^c \times \Omega^c,$ and define $\mathcal{S} = \mathbb{H}^{N} \times \mathbb{H}^{N} \setminus \mathcal{O}.$
 Let $X$ be the space of Lebesgue measurable functions $u:\mathbb{H}^N\to\mathbb{R}$ such that their restriction to $\Omega$  belongs to $L^2(\Omega)$ and 
$$
 [u] = [u]_X = \left(\iint\limits_{\mathcal{S}}\frac{|u(\eta) - u(\xi)|^2}{\left|\eta^{-1} \circ \xi \right|^{Q+2s}} \ d\xi d\eta \right)^{\frac{1}{2}} < \infty.
$$

\noindent The space $X$ is equipped with the norm,
\[
 \|u\|=\|u\|_X=\left(\|u\|_{L^2(\Omega)}^2 + [u]_X^2\right)^{\frac{1}{2}}.
\]

\noindent Let $X_0 = \{ u \in X : u = 0 \text{ a.e. in } \mathbb{H}^N \setminus \Omega \}$. Then, we have the following Poincar\'{e} type inequality on $X_0:$
\begin{theorem}[Poincar\'e-type Inequality {\cite[p.~4210]{ghosh2024compact}}] There exists a constant \( C > 0 \) such that  
\[
\|u\|_{L^2(\Omega)} \leq C [u]_X, \quad \text{for all } u \in X_0.
\]  
In particular, \( [u]_X \) defines a norm on \( X_0 \).
\end{theorem}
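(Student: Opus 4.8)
The plan is to extract the Poincar\'e inequality directly from the nonlocal interaction between $\Omega$ and its complement that is built into the seminorm $[\,\cdot\,]_X$, using crucially that every $u \in X_0$ vanishes a.e.\ on $\mathbb{H}^N\setminus\Omega$. First I would discard all of $\mathcal{S}$ except the ``cross'' piece $\Omega\times\Omega^c$: since $\mathcal{S}\supseteq\Omega\times\Omega^c$ and $u(\eta)=0$ for $\eta\in\Omega^c$, the integrand $|u(\eta)-u(\xi)|^2$ reduces to $|u(\xi)|^2$ there, giving
\[
[u]_X^2 \;\ge\; \int_\Omega |u(\xi)|^2\left(\int_{\Omega^c}\frac{d\eta}{|\eta^{-1}\circ\xi|^{Q+2s}}\right)d\xi .
\]
It then suffices to bound the inner integral below by a positive constant that does not depend on $\xi\in\Omega$.

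For that second step I would fix a point $\xi_0$ and a radius $\rho$ with $\Omega\subseteq B_\rho(\xi_0)$, which is possible because $\Omega$ is bounded. On the region $\{\eta:\,|\xi_0^{-1}\circ\eta|\ge 2\rho\}$ one has $\eta\in B_\rho(\xi_0)^c\subseteq\Omega^c$, and, using that $d(\xi,\eta)=|\eta^{-1}\circ\xi|$ is a genuine left-invariant metric (the triangle inequality holds with constant $1$ for $|\cdot|_\mathbb{H}$), the estimate $|\eta^{-1}\circ\xi|\le |\xi_0^{-1}\circ\xi|+|\xi_0^{-1}\circ\eta|\le \tfrac32\,|\xi_0^{-1}\circ\eta|$ holds for every $\xi\in\Omega$. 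Hence, after the left translation $\zeta=\xi_0^{-1}\circ\eta$ (under which the Haar measure is invariant) and the polar-coordinate formula associated with the $Q$-homogeneity of the measure (the unit sphere having mass $Q\alpha_Q$),
\[
\int_{\Omega^c}\frac{d\eta}{|\eta^{-1}\circ\xi|^{Q+2s}}
\;\ge\;\Big(\tfrac23\Big)^{Q+2s}\!\!\int_{\{|\zeta|\ge 2\rho\}}\frac{d\zeta}{|\zeta|^{Q+2s}}
\;=\;\Big(\tfrac23\Big)^{Q+2s}\frac{Q\alpha_Q}{2s}\,(2\rho)^{-2s}
\;=:\;\kappa \,>\,0 .
\]
Combining the two displays gives $[u]_X^2\ge\kappa\,\|u\|_{L^2(\Omega)}^2$, i.e.\ the claim with $C=\kappa^{-1/2}$; and since $\|u\|_X^2=\|u\|_{L^2(\Omega)}^2+[u]_X^2\le(1+C^2)[u]_X^2$ while $[u]_X=0$ forces $u$ constant and hence $u\equiv0$ on $X_0$, the seminorm $[\,\cdot\,]_X$ is indeed a norm on $X_0$.

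The only delicate point is the uniform lower bound in the second step. Everything reduces to the convergence of the tail integral $\int_{\{|\zeta|\ge 2\rho\}}|\zeta|^{-(Q+2s)}\,d\zeta$, which is finite precisely because the exponent $Q+2s$ exceeds the homogeneous dimension $Q$; this is where the choice of $\Omega^c$ as the integration set (rather than a neighbourhood of $\xi$ inside $\Omega$, over which the analogous integral would diverge) is essential. The two structural facts that make the comparison clean are the validity of the ordinary triangle inequality for $|\cdot|_\mathbb{H}$ and the exact $Q$-homogeneity $|B_r(\xi_0)|=\alpha_Q r^Q$ of the Haar measure, both recorded in the preliminaries.
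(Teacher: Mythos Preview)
The paper does not actually prove this statement; it simply records the Poincar\'e-type inequality as a quoted result from \cite[p.~4210]{ghosh2024compact} and moves on. So there is no ``paper's own proof'' to compare against.

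Your argument is correct and is in fact the standard proof of such fractional Poincar\'e inequalities (the Euclidean version appears, e.g., in the Hitchhiker's guide of Di~Nezza--Palatucci--Valdinoci, and your adaptation to $\mathbb{H}^N$ is exactly what one expects in the cited reference). The two ingredients you single out---the triangle inequality with constant $1$ for the Kor\'anyi norm and the $Q$-homogeneity of the Haar measure---are precisely what is needed, and both are recorded in Section~\ref{sec:2} of the paper. A very minor remark: in the last sentence you do not need the intermediate step ``$[u]_X=0$ forces $u$ constant''; the inequality $\|u\|_{L^2(\Omega)}\le C[u]_X$ that you have just proved already gives $u=0$ a.e.\ in $\Omega$ directly, and $u=0$ on $\Omega^c$ by the definition of $X_0$.
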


\noindent Ghosh et al.~\cite{ghosh2024compact} define the space \( X^{s,2}_0(\Omega) \), where \( \Omega \) is a bounded open subset with continuous boundary of a stratified Lie group, as the closure of \( C_c^\infty(\Omega) \) with respect to the norm  
\[
\|u\|_{X^{s,2}_0(\Omega)} = \|u\|_{L^2(\Omega)} +  \left(\ \iint\limits_{\mathbb{H}^N \ \mathbb{H}^N} \frac{|u(\eta) - u(\xi)|^2}{\left|\eta^{-1} \circ \xi \right|^{Q + 2s}} \, d\xi \, d\eta \right)^{\frac{1}{2}}.
\]  
They further establish the following characterization:
\[
X^{s,2}_0(\Omega) = \left\{ u \in HW^{s,2}(\mathbb{H}^N) : u = 0 \text{ a.e. in } \mathbb{H}^N \setminus \Omega \right\}.
\]  
Therefore, the space \( X_0 \) introduced earlier coincides with \( X^{s,2}_0(\Omega) \). From~\cite[Theorem~7]{ghosh2024compact}, it follows that the space \( X_0 \) is continuously embedded in \( L^r(\Omega) \) for all \( 1 \leq r \leq Q_s^* \), and compactly embedded in \( L^r(\Omega) \) for all \( 1 \leq r < Q_s^* \).

\noindent We define, $$S_s = \inf\limits_{u \in X_0\backslash\{0\}} \frac{[u]^2}{\|u\|^2_{L^{Q^*_s}(\Omega)}}.$$
One can prove that $S_s$ is independent of $\Omega$ and is achieved only when $\Omega=\mathbb{H}^N$(see for instance \cite[Lemma 2.2]{mukherjee2016fractional}). The infimum  is achieved by the function (up to translation and dilation) 
\begin{equation} \label{eqn2}
    U(\xi) = U(x,y,t) = \frac{C}{\left( t^2 + (1 + |x|^2 + |y|^2)^2  \right)^{\frac{Q-2s}{4}}}.
\end{equation}
This was proved by Frank and Lieb~\cite{frank2012sharp}. One can also refer to \cite{frank2015extension,garofalo2024optimal}.

\noindent Let $A(u) = \iint\limits_{\mathcal{S}} \frac{|u(\xi) - u(\eta)|^2}{|\eta^{-1} \circ \xi|^{Q+2s}}  d\xi  d\eta.$ The first eigenvalue for the fractional sub-Laplace operator is given by 
\begin{equation}\label{eigen}
    \lambda_1 = \inf\{A(u) : \|u\|_{L^2(\Omega)} = 1, u \in X_0 \}.
\end{equation}
Hence, we have
$$
    \lambda_1 \int\limits_{\Omega} |u|^2  d \xi \leq  \iint\limits_{\mathcal{S}}\frac{|u(\eta) - u(\xi)|^2}{\left|\eta^{-1} \circ \xi \right|^{Q+2s}}  d \xi d\eta.
$$
For more details on eigenvalues of the fractional sub-Laplace operator, we refer to \cite{ghosh2024compact}.

We say that $u \in X_0$ is a weak solution of \eqref{eqn1.1} if
\small
\begin{equation} \label{ws}
    \iint\limits_S \frac{(u(\xi) - u(\eta))(v(\xi) - v(\eta))}{\left|\eta^{-1} \circ \xi \right|^{Q+2s}}  d \xi d \eta = \int\limits_{\Omega} |u(\xi)|^{Q_s^* - 2} u(\xi) v(\xi)  d \xi + \lambda \int\limits_{\Omega} u(\xi)v(\xi)  d\xi
\end{equation}
\normalsize
for every $v \in X_0$.
The energy functional corresponding to the problem \eqref{eqn1.1} is given by
\begin{equation} \label{eqn2.3}
    I_\lambda(u) = \frac{1}{2} \|u\|^2 - \frac{\lambda}{2} \int\limits_{\Omega} |u|^2  d \xi  -  \frac{1}{Q_s^*}\int\limits_{\Omega} |u|^{Q_s^*}  d \xi.
\end{equation}
One can verify that the critical points of $I_\lambda$ are the weak solutions to \eqref{eqn1.1}.

%%%%%%%%  Section 3 %%%%%%%%%%%%%%%%%%%%%%%%%%%%%%
\section{Proof of Theorem \ref{thm1}} \label{sec:3}

\setcounter{section}{3} \setcounter{equation}{0} 
To prove Theorem~\ref{thm1}, we establish a series of auxiliary lemmas. We begin by defining the quantity
$$
S_{s, \lambda} = \inf\limits_{u \in X_0 \setminus\{0\}} \frac{\iint\limits_{\mathcal{S}} \frac{|u(\xi) - u(\eta)|^2}{|\eta^{-1} \circ \xi|^{Q+2s}} d\xi d\eta - \lambda \int\limits_{\mathbb{H}^N} |u(\xi)|^2  d\xi }{\left( \int\limits_{\mathbb{H}^N} |u(\xi)|^{Q^*_s} d \xi \right)^{\frac{2}{Q^*_s}}}.
$$
We aim to show that this infimum is attained by some function \( u \in X_0 \).

Let \( U \) be the function given in~\eqref{eqn2}, and define
\begin{equation*}
    \overline{u}(\xi) = \frac{U(\xi)}{\|U\|_{L^{Q^*_s}(\mathbb{H}^N)}}.
\end{equation*}
By definition, we then have
\[
S_s = \iint\limits_{\mathcal{S}} \frac{|\overline{u}(\xi) - \overline{u}(\eta)|^2}{|\eta^{-1} \circ \xi|^{Q+2s}} \, d\xi \, d\eta.
\]

Next, we consider the following critical problem:
\begin{equation} \label{eqn3.2}
    (-\Delta_{\mathbb{H}})^s u = |u|^{Q^*_s - 2}u \quad \text{in } \mathbb{H}^N.
\end{equation}

 \begin{lemma}    $u^*(\xi) =  \overline{u}\left( \delta_{S_s^{-\frac{1}{2s}}}(\xi)\right)$ is a solution of Problem \eqref{eqn3.2} satisfying the property
$\int\limits_{\mathbb{H}^N} |u^*(\xi)|^{Q^*_s} d\xi = S_s^{\frac{Q}{2s}}.$
 \end{lemma}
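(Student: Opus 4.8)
The plan is to combine two facts: first, that the normalized extremal $\overline{u}$ satisfies the Euler--Lagrange equation associated with the Sobolev quotient up to an explicit multiplicative constant; and second, a homogeneity (scaling) identity for $(-\Delta_{\mathbb{H}})^s$ under the dilations $\delta_a$ that lets us absorb that constant by the specific choice $a = S_s^{-1/(2s)}$. Tracking the two exponents correctly is what makes the rescaled function solve the clean equation \eqref{eqn3.2} and fixes the value of its $L^{Q^*_s}$ norm.

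First I would record the Euler--Lagrange equation for $\overline{u}$. Since $S_s$ is attained by $\overline{u}$ with $\|\overline{u}\|_{L^{Q^*_s}(\mathbb{H}^N)} = 1$ and $[\overline{u}]^2 = S_s$, the function $\overline{u}$ is a constrained minimizer of $u \mapsto [u]^2$ subject to $\int_{\mathbb{H}^N}|u|^{Q^*_s}\,d\xi = 1$. Writing the Lagrange multiplier condition in weak form and then testing it against $\overline{u}$ itself fixes the multiplier; one obtains
$$(-\Delta_{\mathbb{H}})^s \overline{u} = S_s\,|\overline{u}|^{Q^*_s-2}\overline{u} \quad \text{in } \mathbb{H}^N,$$
in the weak sense (equivalently pointwise, as $\overline{u}$ is smooth with the defining principal-value integral convergent). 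Here I use that the Sobolev quotient is $C^1$ on $X_0\setminus\{0\}$ and that $U$ from~\eqref{eqn2} is a genuine extremal by Frank--Lieb.

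Next I would establish the scaling identity. For $u_a(\xi) := u(\delta_a(\xi))$ I substitute into the principal-value integral defining $(-\Delta_{\mathbb{H}})^s u_a(\xi)$ and change variables $\zeta = \delta_a(\eta)$. Using that $\delta_a$ is a group automorphism, so that $\eta^{-1}\circ\xi = \delta_{a^{-1}}\big(\zeta^{-1}\circ(\delta_a\xi)\big)$, together with the homogeneity $|\delta_a(\cdot)| = a\,|\cdot|$ of the norm and the Jacobian $d\eta = a^{-Q}\,d\zeta$, all the powers of $a$ collect into
$$(-\Delta_{\mathbb{H}})^s u_a(\xi) = a^{2s}\,\big[(-\Delta_{\mathbb{H}})^s u\big](\delta_a\xi).$$
Applying this to $u = \overline{u}$ with $a = S_s^{-1/(2s)}$, the prefactor becomes $a^{2s}S_s = 1$, and combined with the Euler--Lagrange equation this gives exactly $(-\Delta_{\mathbb{H}})^s u^* = |u^*|^{Q^*_s-2}u^*$, i.e.~\eqref{eqn3.2}.

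Finally, the normalization follows from the same substitution $\zeta = \delta_a(\xi)$, which yields $\int_{\mathbb{H}^N}|u^*|^{Q^*_s}\,d\xi = a^{-Q}\int_{\mathbb{H}^N}|\overline{u}|^{Q^*_s}\,d\zeta = a^{-Q}$; with $a = S_s^{-1/(2s)}$ this is $S_s^{Q/2s}$, as claimed. The only genuine obstacle is the bookkeeping of the exponents of $a$ in the scaling identity: the homogeneity degrees $Q$ (measure) and $1$ (norm) must be combined correctly to produce the operator degree $2s$, and this is precisely what dictates the value $a=S_s^{-1/(2s)}$. Everything else is routine change of variables.
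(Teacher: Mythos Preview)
Your proposal is correct and follows essentially the same strategy as the paper: derive the Euler--Lagrange equation for $\overline{u}$ with multiplier $S_s$, then use the dilation change of variables to pass to $u^*$ and compute the $L^{Q^*_s}$ norm. The only cosmetic difference is that the paper carries out the scaling step entirely in the weak formulation (changing variables directly inside the bilinear integral tested against an arbitrary $v$), whereas you package the same computation as a pointwise scaling identity $(-\Delta_{\mathbb{H}})^s u_a = a^{2s}\,[(-\Delta_{\mathbb{H}})^s u]\circ\delta_a$ and then apply it; the underlying algebra is identical.
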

\begin{proof}
     Since $U$ is an optimizer, there exists a Lagrange multiplier $\gamma \in \mathbb{R}$ such that
\begin{equation*}
\iint\limits_{\mathbb{H}^{N} \mathbb{H}^{N}} \frac{(\overline{u}(\xi) - \overline{u}(\eta))(v(\xi) - v(\eta))}{|\eta^{-1} \circ \xi|^{Q+2s}} d\xi d\eta  = \gamma \int\limits_{\mathbb{H}^N} |\overline{u}(\xi)|^{Q^*_s - 2} \overline{u}(\xi)v(\xi)  d\xi,
\end{equation*}
for any $v \in HW^{s,2}(\mathbb{H}^N)$.
Taking $v = \overline{u}$ as a test function, we get
\begin{equation*}
S_s = \iint\limits_{\mathbb{H}^{N} \ \mathbb{H}^{N}} \frac{|\overline{u}(\xi) - \overline{u}(\eta)|^2} {|\eta^{-1} \circ \xi|^{Q+2s}} d\xi d\eta = \gamma,
\end{equation*}
since $\|\overline{u}\|_{L^{Q^*_s}(\mathbb{H}^N)} = 1.$

Note that $u^* \in HW^{s,2}(\mathbb{H}^N) \setminus \{0\}$. 
Let $\delta_{S_s^{-\frac{1}{2s}}}(\xi) = \xi'$ and $\delta_{S_s^{-\frac{1}{2s}}}(\eta) = \eta'.$ So, $S_s^{-\frac{Q}{2s}} d \xi = d \xi' $ and $S_s^{-\frac{Q}{2s}} d \eta = d \eta'.$ Then, using these change of variables, for any $v \in HW^{s,2}(\mathbb{H}^N),$
\begin{align*}
    \iint\limits_{\mathbb{H}^{N} \ \mathbb{H}^{N}} &\frac{ (u^*(\xi) -  u^*(\eta))(v(\xi) - v(\eta) )}{|\eta^{-1} \circ \xi|^{Q+2s}} d\xi d\eta \\
&=\iint\limits_{\mathbb{H}^{N} \ \mathbb{H}^{N}} \frac{\left( \overline{u}\left( \delta_{S_s^{-\frac{1}{2s}}}(\xi)\right) -  \overline{u} \left( \delta_{S_s^{-\frac{1}{2s}}}(\eta)\right)\right)(v(\xi) - v(\eta) )}{|\eta^{-1} \circ \xi|^{Q+2s}} d\xi d\eta\\
 &= S_{s}^{\frac{Q-2s}{2s}}\iint\limits_{\mathbb{H}^{N} \ \mathbb{H}^{N}} \frac{\left( \overline{u}\left( \xi'\right) -  \overline{u} \left( \eta'\right)\right)(v(\delta_{S_s^{\frac{1}{2s}}}(\xi')) - v(\delta_{S_s^{\frac{1}{2s}}}(\eta')) )}{|\eta'^{-1} \circ \xi'|^{Q+2s}} d\xi' d\eta' \\
&= S_{s}^{\frac{Q}{2s}} \int\limits_{\mathbb{H}^N} |\overline{u}(\xi')|^{Q^*_s - 2}\overline{u}(\xi') v(\delta_{S_s^{\frac{1}{2s}}}(\xi')) d\xi' \\
&= \int\limits_{\mathbb{H}^N} |u^*(\xi)|^{Q^*_s - 2} u^*(\xi) v(\xi)  d\xi. \\
\end{align*}
Hence, $u^*$ is a weak solution to the problem \eqref{eqn3.2}. Moreover, on taking $v = u^*,$ we get
\begin{align*}
    \iint\limits_{\mathbb{H}^{N} \ \mathbb{H}^{N}} \frac{ |u^*(\xi) -  u^*(\eta)|^2}{|\eta^{-1} \circ \xi|^{Q+2s}} d\xi d\eta &= \int\limits_{\mathbb{H}^N} |u^*(\xi)|^{Q^*_s} d\xi \\
    &= \int\limits_{\mathbb{H}^N} |\overline{u}\left( \delta_{S_s^{-\frac{1}{2s}}}(\xi)\right)|^{Q^*_s} d\xi \\
    &= S_s^{\frac{Q}{2s}}.
\end{align*}
\end{proof}
Next, let $\varepsilon > 0$ be arbitrary. Define,
\begin{equation} \label{Ue}
    U_{\varepsilon}(\xi) = \frac{1}{\varepsilon^{\frac{Q-2s}{2}}} u^*\left( \delta_{\frac{1}{\varepsilon}}(\xi)\right).
\end{equation}
\begin{lemma}
The function $U_\varepsilon$ is a solution to Problem \eqref{eqn3.2}, and  it satisfies 
$$ \iint\limits_{\mathbb{H}^{N} \ \mathbb{H}^{N}} \frac{ |U_{\varepsilon}(\xi) -  U_{\varepsilon}(\eta)|^2}{|\eta^{-1} \circ \xi|^{Q+2s}} d\xi d\eta = \int\limits_{\mathbb{H}^N} |U_{\varepsilon}(\xi)|^{Q^*_s} d\xi 
 = S_{s}^{\frac{Q}{2s}}$$
for any $\varepsilon > 0$.
\end{lemma}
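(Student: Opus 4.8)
The plan is to run the same dilation-and-change-of-variables argument that established the previous lemma, now exploiting the scale invariance of the critical equation \eqref{eqn3.2} under the transformation $u \mapsto \varepsilon^{-\frac{Q-2s}{2}} u(\delta_{1/\varepsilon}(\cdot))$. First I would set $\xi' = \delta_{1/\varepsilon}(\xi)$ and $\eta' = \delta_{1/\varepsilon}(\eta)$. Since $\delta_{1/\varepsilon}$ is a group homomorphism and the homogeneous norm has degree one, one has $\eta^{-1}\circ\xi = \delta_\varepsilon(\eta'^{-1}\circ\xi')$, hence $|\eta^{-1}\circ\xi|^{Q+2s} = \varepsilon^{Q+2s}|\eta'^{-1}\circ\xi'|^{Q+2s}$; likewise the Jacobian relation $d(\delta_s\xi)=s^Q\,d\xi$ gives $d\xi = \varepsilon^Q\,d\xi'$ and $d\eta = \varepsilon^Q\,d\eta'$. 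With these substitutions $U_\varepsilon(\xi) - U_\varepsilon(\eta) = \varepsilon^{-\frac{Q-2s}{2}}\bigl(u^*(\xi') - u^*(\eta')\bigr)$.

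For the two numerical identities I would simply collect the powers of $\varepsilon$. In the Dirichlet integral the squared numerator contributes $\varepsilon^{-(Q-2s)}$, the kernel contributes $\varepsilon^{-(Q+2s)}$, and the two measures contribute $\varepsilon^{2Q}$; these cancel exactly, leaving the corresponding integral for $u^*$, which equals $S_s^{\frac{Q}{2s}}$ by the previous lemma. In the $L^{Q_s^*}$ integral the prefactor contributes $\varepsilon^{-\frac{Q-2s}{2}Q_s^*}$ and the measure contributes $\varepsilon^Q$; since $Q_s^* = \frac{2Q}{Q-2s}$ forces $\frac{Q-2s}{2}Q_s^* = Q$, these also cancel and the integral reduces to $\int_{\mathbb{H}^N}|u^*|^{Q_s^*}\,d\xi = S_s^{\frac{Q}{2s}}$.

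To see that $U_\varepsilon$ is itself a weak solution, I would test the weak formulation of \eqref{eqn3.2} for $U_\varepsilon$ against an arbitrary $v \in HW^{s,2}(\mathbb{H}^N)$ and perform the same change of variables, writing $w(\xi') := v(\delta_\varepsilon(\xi'))$, which again belongs to $HW^{s,2}(\mathbb{H}^N)$. A bookkeeping of exponents shows that both sides carry the common factor $\varepsilon^{\frac{Q-2s}{2}}$: on the left the Dirichlet pairing reduces to $\varepsilon^{\frac{Q-2s}{2}}$ times the pairing of $u^*$ with $w$, while on the right the nonlinear term, using $Q_s^*-1 = \frac{Q+2s}{Q-2s}$, reduces to $\varepsilon^{\frac{Q-2s}{2}}$ times $\int_{\mathbb{H}^N}|u^*|^{Q_s^*-2}u^*\,w\,d\xi'$. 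Since $u^*$ solves \eqref{eqn3.2}, these two reduced quantities agree, and cancelling the common factor $\varepsilon^{\frac{Q-2s}{2}}$ yields the weak formulation for $U_\varepsilon$.

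The computation presents no conceptual difficulty; the main point requiring care is the exponent bookkeeping together with the correct use of the homomorphism and inverse properties of $\delta_\varepsilon$, which ensure that the prefactor $\varepsilon^{-\frac{Q-2s}{2}}$ is precisely the critical scaling leaving both the Dirichlet form and the critical nonlinearity invariant. Indeed, the repeated cancellation of all powers of $\varepsilon$ is exactly a manifestation of this invariance.
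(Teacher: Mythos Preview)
Your proposal is correct and follows essentially the same route as the paper: the same dilation change of variables $\xi'=\delta_{1/\varepsilon}(\xi)$, $\eta'=\delta_{1/\varepsilon}(\eta)$ is used together with the homogeneity of the norm and the Jacobian relation to reduce everything to the corresponding identities for $u^*$ established in the previous lemma. The only cosmetic difference is that the paper first verifies the weak formulation and then specializes to $v=U_\varepsilon$ to obtain the equality of the two integrals, whereas you compute each integral separately; the exponent bookkeeping is identical.
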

\begin{proof}
Let $\delta_{\frac{1}{\varepsilon}}(\xi)=\xi'.$ So $\varepsilon^{-Q}d \xi=d\xi'.$ Consider for any $v \in HW^{s,2}(\mathbb{H}^N)$,
\begin{align*}
\iint\limits_{\mathbb{H}^{N} \ \mathbb{H}^{N}} &\frac{ (U_{\varepsilon}(\xi) -  U_{\varepsilon}(\eta))(v(\xi) - v(\eta) )}{|\eta^{-1} \circ \xi|^{Q+2s}} d\xi d\eta \\
&=\frac{1}{\varepsilon^{\frac{Q-2s}{2}}} \iint\limits_{\mathbb{H}^{N} \ \mathbb{H}^{N}} \frac{\left( u^*\left( \delta_{\frac{1}{\varepsilon}}(\xi)\right) -  u^*\left( \delta_{\frac{1}{\varepsilon}}(\eta)\right)\right)(v(\xi) - v(\eta) )}{|\eta^{-1} \circ \xi|^{Q+2s}} d\xi d\eta \\
&= \varepsilon^{\frac{Q-2s}{2}} \iint\limits_{\mathbb{H}^{N} \ \mathbb{H}^{N}} \frac{\left( u^*( \xi' ) -  u^*(\eta')\right)(v(\delta_{\varepsilon}(\xi')) - v(\delta_{\varepsilon}(\eta')) )}{|\eta'^{-1} \circ \xi'|^{Q+2s}} d\xi' d\eta' \\
&= \varepsilon^{\frac{Q-2s}{2}} \int\limits_{\mathbb{H}^N} |u^*(\xi')|^{Q^*_s - 2} u^*(\xi')v(\delta_{\varepsilon}(\xi'))  d\xi' \\
&= \int\limits_{\mathbb{H}^N} |U_{\varepsilon}(\xi)|^{Q^*_s - 2} U_{\varepsilon}(\xi) v(\xi)  d\xi.
\end{align*}
On putting $v = U_{\varepsilon},$ we get 
\begin{align*}
    \iint\limits_{\mathbb{H}^{N} \ \mathbb{H}^{N}} \frac{ |U_{\varepsilon}(\xi) -  U_{\varepsilon}(\eta)|^2}{|\eta^{-1} \circ \xi|^{Q+2s}} d\xi d\eta &= \int\limits_{\mathbb{H}^N} |U_{\varepsilon}(\xi)|^{Q^*_s} d\xi \\
    &=  \int\limits_{\mathbb{H}^N} |u^*(\xi)|^{Q^*_s} d\xi \\
 &= S_{s}^{\frac{Q}{2s}}.
\end{align*}
\end{proof}
Define, 
\begin{equation} \label{ue}
    u_{\varepsilon}(\xi) = U_{\varepsilon}(\xi)\cdot\phi(\xi),
\end{equation}
where $U_\varepsilon$ is defined in \eqref{Ue}, and $\phi \in C^{\infty}{(\mathbb{H}^N)}$ is such that $0 \leq \phi \leq 1$ and
\begin{align} \label{eqn3.8}
    \begin{split}
        \phi &= \begin{cases}
1 \text{ in } B_r(0),\\
0 \text{ in } \mathbb{H}^N \setminus B_{2r}(0).
\end{cases} 
    \end{split}
\end{align}
for some  $r > 0$  such that $B_{4r}(0) \subset \Omega.$
\begin{lemma}\label{eqn3.7}
    Let $\rho > 0.$ If $|\xi| > \rho$, then
\begin{equation*}
    |u_{\varepsilon}(\xi)| \leq |U_{\varepsilon}(\xi)| \leq C \varepsilon^{\frac{Q-2s}{2}}.
\end{equation*}
for any $\varepsilon > 0$ and for some positive constant $C.$
\end{lemma}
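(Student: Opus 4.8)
The plan is to reduce the statement to an explicit pointwise formula for $U_\varepsilon$ and then bound its denominator from below. The first inequality $|u_\varepsilon(\xi)| \le |U_\varepsilon(\xi)|$ is immediate from the definition \eqref{ue}: since $0 \le \phi \le 1$, we have $|u_\varepsilon| = |U_\varepsilon|\,\phi \le |U_\varepsilon|$ pointwise. Hence the entire content lies in proving $|U_\varepsilon(\xi)| \le C\varepsilon^{\frac{Q-2s}{2}}$ whenever $|\xi| > \rho$.

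First I would unwind the chain of definitions. Writing $b = S_s^{-\frac{1}{2s}}$ and $z = |x|^2 + |y|^2$, I substitute \eqref{eqn2} into $\overline u$, then into $u^* = \overline u \circ \delta_b$, recalling that the dilation acts anisotropically as $\delta_b(x,y,t) = (bx, by, b^2 t)$, and finally into \eqref{Ue}. After the substitution $\delta_{1/\varepsilon}(\xi) = (x/\varepsilon, y/\varepsilon, t/\varepsilon^2)$, factoring $\varepsilon^{-4}$ out of the denominator collapses the two $\varepsilon$-scalings into a single clean expression, yielding
\begin{equation*}
U_\varepsilon(\xi) = \frac{\tilde C\,\varepsilon^{\frac{Q-2s}{2}}}{\bigl(b^4 t^2 + (\varepsilon^2 + b^2 z)^2\bigr)^{\frac{Q-2s}{4}}},
\end{equation*}
where $\tilde C = C/\|U\|_{L^{Q^*_s}(\mathbb{H}^N)}$ absorbs the normalizing constants.

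With this formula in hand the estimate is purely algebraic. I expand $(\varepsilon^2 + b^2 z)^2 = \varepsilon^4 + 2\varepsilon^2 b^2 z + b^4 z^2$ and discard the nonnegative terms $\varepsilon^4$ and $2\varepsilon^2 b^2 z$ to obtain the lower bound $b^4 t^2 + (\varepsilon^2 + b^2 z)^2 \ge b^4(t^2 + z^2)$. The crucial point is that the homogeneous norm satisfies $|\xi|^4 = z^2 + t^2$, so $|\xi| > \rho$ gives $t^2 + z^2 > \rho^4$; raising to the power $\frac{Q-2s}{4}$, the denominator is bounded below by $b^{Q-2s}\rho^{Q-2s}$. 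This yields
\begin{equation*}
|U_\varepsilon(\xi)| \le \frac{\tilde C}{b^{Q-2s}\rho^{Q-2s}}\,\varepsilon^{\frac{Q-2s}{2}},
\end{equation*}
so the claim holds with $C = \tilde C\,b^{-(Q-2s)}\rho^{-(Q-2s)}$, a constant depending on $\rho$ but independent of $\varepsilon$ and $\xi$.

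There is no serious obstacle here: the result is a decay estimate that follows once the explicit form of $U_\varepsilon$ is available. The only place to exercise care is the bookkeeping in the first step, namely matching the anisotropic scaling of the $t$-variable (which scales as $b^2$, not $b$) against the quadratic appearance of $t^2$ in $U$, and correctly collecting the factor $\varepsilon^{-\frac{Q-2s}{2}}$ from \eqref{Ue} with the $\varepsilon^{Q-2s}$ produced when factoring the denominator. A useful sanity check is that the power of $\varepsilon$ in the final bound is exactly $\frac{Q-2s}{2}$, consistent with the $L^\infty$-scaling of the extremal bubble $U_\varepsilon$ away from its concentration point at the origin.
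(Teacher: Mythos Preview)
Your proof is correct and follows essentially the same approach as the paper: both unwind the definitions to obtain an explicit pointwise formula for $U_\varepsilon$ and then bound the denominator from below using $|\xi|^4=(|x|^2+|y|^2)^2+t^2>\rho^4$. Your version is slightly more streamlined because you factor out the $\varepsilon$-dependence first and drop the $\varepsilon^4$ and cross terms directly, whereas the paper passes through the intermediate form $1+|\delta_{1/(\varepsilon S_s^{1/(2s)})}(\xi)|^4$ before arriving at the same bound.
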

\begin{proof}
    Let $\rho > 0$ and let $|\xi| > \rho.$ Then,
\begin{align*}
   |U_{\varepsilon}(\xi)| &=\frac{1}{\varepsilon^{\frac{Q-2s}{2}}} \frac{C}{\left( \frac{t^2}{{\varepsilon^4 S_s^{\frac{2}{s}}}} + \left(1 + \frac{|x|^2}{\varepsilon^2 S_s^{\frac{1}{s}}} + \frac{|y|^2}{\varepsilon^2 S_s^{\frac{1}{s}}}\right)^2  \right)^{\frac{Q-2s}{4}}} \\
   &\leq \frac{1}{\varepsilon^{\frac{Q-2s}{2}}} \frac{C}{\left( 1 + \frac{t^2}{{\varepsilon^4 S_s^{\frac{2}{s}}}} + \left( \frac{|x|^2}{\varepsilon^2 S_s^{\frac{1}{s}}} + \frac{|y|^2}{\varepsilon^2 S_s^{\frac{1}{s}}}\right)^2  \right)^{\frac{Q-2s}{4}}} \\
   &\leq \frac{1}{\varepsilon^{\frac{Q-2s}{2}}} \frac{C}{\left( 1 + |\delta_\frac{1}{\varepsilon  S_{s}^{\frac{1}{2s}}}(\xi)|^4  \right)^{\frac{Q-2s}{4}}} \\
   &\leq \frac{1}{\varepsilon^{\frac{Q-2s}{2}}} \frac{\varepsilon^{Q-2s}C}{\left( \varepsilon^4 + C\rho^4  \right)^{\frac{Q-2s}{4}}} \\
   &\leq C \varepsilon^{\frac{Q-2s}{2}}.
\end{align*}
\end{proof}

\begin{lemma}\label{lem4}
   Let $\rho > 0.$ If $|\xi| > \rho$, then
\begin{equation}\label{eqn3.9}
    |\nabla_{\mathbb{H}}u_\varepsilon(\xi)| \leq C \varepsilon^\frac{Q-2s}{2} 
\end{equation}
\end{lemma}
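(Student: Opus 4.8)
The plan is to differentiate the product $u_\varepsilon = U_\varepsilon\,\phi$ by the Leibniz rule for the horizontal gradient,
$$\nabla_{\mathbb{H}} u_\varepsilon = \phi\,\nabla_{\mathbb{H}} U_\varepsilon + U_\varepsilon\,\nabla_{\mathbb{H}} \phi,$$
and to estimate the two summands separately on the region $|\xi| > \rho$. Since $\phi \in C^\infty(\mathbb{H}^N)$ is fixed with $0 \leq \phi \leq 1$ and $\nabla_{\mathbb{H}}\phi$ supported in the annulus $\overline{B_{2r}(0)}\setminus B_r(0)$, the quantity $\|\nabla_{\mathbb{H}}\phi\|_\infty$ is a finite constant. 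Hence the second summand is controlled immediately by Lemma~\ref{eqn3.7}: $|U_\varepsilon\,\nabla_{\mathbb{H}}\phi| \leq \|\nabla_{\mathbb{H}}\phi\|_\infty\,|U_\varepsilon(\xi)| \leq C\varepsilon^{(Q-2s)/2}$. Using also $0\leq\phi\leq 1$ for the first summand, the whole problem reduces to bounding $|\nabla_{\mathbb{H}} U_\varepsilon(\xi)|$ for $|\xi| > \rho$.

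For this main term I would exploit the dilation covariance of the horizontal vector fields. Writing $\mu = \varepsilon S_s^{1/(2s)}$, and combining \eqref{eqn2}, the definition of $u^*$, and \eqref{Ue}, one has $U_\varepsilon(\xi) = \varepsilon^{-(Q-2s)/2}\,\overline{u}\big(\delta_{1/\mu}(\xi)\big)$. Because each $X_j,Y_j$ is homogeneous of degree one for the dilations, a direct check from $X_j = \partial_{x_j} + 2y_j\partial_t$ (and the analogue for $Y_j$) shows that for any $f$, $X_j\big(f\circ \delta_{1/\mu}\big)(\xi) = \mu^{-1}(X_j f)\big(\delta_{1/\mu}(\xi)\big)$. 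Therefore
$$\nabla_{\mathbb{H}} U_\varepsilon(\xi) = \frac{1}{\varepsilon^{(Q-2s)/2}\,\mu}\,\big(\nabla_{\mathbb{H}}\overline{u}\big)\big(\delta_{1/\mu}(\xi)\big),$$
which pushes the entire $\varepsilon$-dependence into explicit scalar factors and leaves only a pointwise decay bound on $\nabla_{\mathbb{H}}\overline{u}$ to be found.

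The key computation, and the main obstacle, is that decay estimate. Differentiating $U(x,y,t) = C\,W^{-(Q-2s)/4}$ with $W = t^2 + R^2$ and $R = 1+|x|^2+|y|^2$, one finds $X_j U = -C(Q-2s)W^{-(Q-2s)/4 - 1}(x_j R + y_j t)$ and $Y_j U = -C(Q-2s)W^{-(Q-2s)/4 - 1}(y_j R - x_j t)$. The crucial cancellation is the algebraic identity $(x_j R + y_j t)^2 + (y_j R - x_j t)^2 = (x_j^2 + y_j^2)W$, which yields $|\nabla_{\mathbb{H}} U| = C(Q-2s)\,(|x|^2+|y|^2)^{1/2}\,W^{-(Q-2s)/4 - 1/2}$. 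Since $(|x|^2+|y|^2)^{1/2}\leq |\xi| \leq 1+|\xi|$ and $W \geq \tfrac12(1+|\xi|^4)\geq c(1+|\xi|)^4$, this gives the clean bound $|\nabla_{\mathbb{H}}\overline{u}(\zeta)| \leq C(1+|\zeta|)^{-(Q-2s+1)}$ for all $\zeta\in\mathbb{H}^N$.

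Finally I would assemble the estimate. For $|\xi| > \rho$ we have $|\delta_{1/\mu}(\xi)| = |\xi|/\mu$, so the decay bound gives $|\nabla_{\mathbb{H}}\overline{u}(\delta_{1/\mu}(\xi))| \leq C\,\mu^{Q-2s+1}(\mu + |\xi|)^{-(Q-2s+1)} \leq C\,\mu^{Q-2s+1}\rho^{-(Q-2s+1)}$. Substituting into the scaling identity and using $\mu = \varepsilon S_s^{1/(2s)}$, the powers of $\mu$ and $\varepsilon$ collapse exactly as in Lemma~\ref{eqn3.7}, leaving $|\nabla_{\mathbb{H}} U_\varepsilon(\xi)| \leq C\varepsilon^{(Q-2s)/2}$ with $C$ depending only on $\rho$, $s$, $Q$ and $S_s$. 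Combining this with the already-established bound on the $U_\varepsilon\,\nabla_{\mathbb{H}}\phi$ term yields \eqref{eqn3.9} and completes the proof.
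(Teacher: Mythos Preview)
Your argument is correct and follows essentially the same route as the paper: both proofs compute the horizontal gradient of the extremal function explicitly (using the algebraic cancellation $(Rx_j+y_jt)^2+(Ry_j-x_jt)^2=(x_j^2+y_j^2)W$) to obtain the decay $|\nabla_{\mathbb{H}}U_\varepsilon(\xi)|\le C\varepsilon^{(Q-2s)/2}|\xi|^{-(Q-2s+1)}$, and then restrict to $|\xi|>\rho$. Your only additions are making the Leibniz-rule step for $u_\varepsilon=\phi\,U_\varepsilon$ and the dilation-covariance identity $X_j(f\circ\delta_{1/\mu})=\mu^{-1}(X_jf)\circ\delta_{1/\mu}$ explicit, which the paper leaves implicit in its direct computation on $U_\varepsilon$.
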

\begin{proof}
    We have,
\begin{align*}
    |\nabla_{\mathbb{H}} U_{\varepsilon}(\xi)| &= (Q-2s) S_s^{-\frac{1}{2s}} \varepsilon^\frac{-Q+2s -2}{2} \frac{\left(\frac{|x|^2}{\varepsilon^2 S_s^{\frac{1}{s}}} + \frac{|y|^2}{\varepsilon^2 S_s^{\frac{1}{s}}} \right)^{\frac{1}{2}}}{\left( \frac{t^2}{{\varepsilon^4 S_s^{\frac{2}{s}}}} + \left(1 + \frac{|x|^2}{\varepsilon^2 S_s^{\frac{1}{s}}} + \frac{|y|^2}{\varepsilon^2 S_s^{\frac{1}{s}}}\right)^2  \right)^{\frac{Q-2s+2}{4}}} \\
    &\leq C \varepsilon^\frac{-Q+2s-2}{2} \frac{|\delta_\frac{1}{\varepsilon S_{s}^{\frac{1}{2s}}}(\xi)|}{|\delta_\frac{1}{\varepsilon S_{s}^{\frac{1}{2s}}}(\xi)|^{Q-2s+2}} \\
    &\leq C \varepsilon^\frac{-Q+2s-2}{2} |\delta_\frac{1}{\varepsilon S_{s}^{\frac{1}{2s}}}(\xi)|^{-Q+2s-1} \\
    &\leq C\frac{1}{|\xi|^{Q-2s+1}} \varepsilon^\frac{-Q+2s-2}{2} \cdot \varepsilon^{Q-2s+1} \\
    &= C \frac{1}{|\xi|^{Q-2s+1}} \varepsilon^\frac{Q-2s}{2}.
\end{align*}
For $|\xi| > \rho,$
\begin{equation*}
     |\nabla_{\mathbb{H}} U_{\varepsilon}(\xi)| \leq C \varepsilon^\frac{Q-2s}{2},
\end{equation*}
 and hence it follows that for $|\xi|>\rho,$
\begin{equation*}
     |\nabla_{\mathbb{H}} u_{\varepsilon}(\xi)| \leq C \varepsilon^\frac{Q-2s}{2}.
\end{equation*}
\end{proof}

\begin{lemma}
    Let $r$ be as in \eqref{eqn3.8}. Then there exists a constant $r_b > 0$, depending on $r$, such that the following estimates hold for all $\varepsilon > 0$ and for some constant $C > 0$:

\begin{enumerate}
    \item If $\xi \in \mathbb{H}^N,\, \eta \in \mathbb{H}^N \setminus B_r(0)$ satisfy  $|\eta^{-1} \circ \xi| \leq r_b,$ then
    \begin{equation} \label{eqn3.11}
    |u_{\varepsilon}(\xi) -u_{\varepsilon}(\eta)|\leq C \varepsilon^\frac{Q-2s}{2}|\eta^{-1} \circ \xi|,
\end{equation}
    \item If $\xi,\,\eta \in  \mathbb{H}^N \setminus B_r(0),$ then
\begin{equation} \label{eqn3.12}
    |u_{\varepsilon}(\xi) -u_{\varepsilon}(\eta)|\leq C \varepsilon^\frac{Q-2s}{2}\min\{1,|\eta^{-1} \circ \xi|\}.
\end{equation}
\end{enumerate}

\end{lemma}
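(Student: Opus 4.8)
The plan is to combine a Lagrange-type mean value inequality on the Heisenberg group with the pointwise and gradient decay estimates already proved in Lemma~\ref{eqn3.7} and Lemma~\ref{lem4}. First I note that the denominator $t^2 + (1+|x|^2+|y|^2)^2$ appearing in~\eqref{eqn2} is bounded below by $1$, so $U$, and hence $u^*$, $U_\varepsilon$, and $u_\varepsilon = U_\varepsilon\,\phi$, are smooth on all of $\mathbb{H}^N$. I may therefore invoke the stratified mean value theorem: there are constants $c_0, C_0 > 0$ such that for every smooth $w$ and all $\xi,\eta \in \mathbb{H}^N$,
\[
|w(\xi) - w(\eta)| \le C_0\,|\eta^{-1}\circ\xi|\,\sup_{\zeta \in \Gamma_{\xi,\eta}} |\nabla_{\mathbb{H}} w(\zeta)|,
\]
where $\Gamma_{\xi,\eta}$ is the Carnot--Carath\'eodory geodesic joining $\eta$ and $\xi$, whose points satisfy $|\eta^{-1}\circ\zeta| \le c_0\,|\eta^{-1}\circ\xi|$ by the comparability of the homogeneous norm with the Carnot--Carath\'eodory distance.

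To prove~\eqref{eqn3.11}, I would fix $r_b := \min\{1,\, r/(2c_0)\}$. Let $\xi \in \mathbb{H}^N$ and $\eta \in \mathbb{H}^N\setminus B_r(0)$ with $|\eta^{-1}\circ\xi| \le r_b$. For any $\zeta \in \Gamma_{\xi,\eta}$, the reverse triangle inequality (valid with constant $1$ for $|\cdot|_{\mathbb{H}}$, as noted earlier in the paper) gives
\[
|\zeta| \ge |\eta| - |\eta^{-1}\circ\zeta| \ge r - c_0\,|\eta^{-1}\circ\xi| \ge r - c_0 r_b \ge \tfrac{r}{2}.
\]
Hence the whole geodesic lies in $\{|\zeta| > \rho\}$ with $\rho = r/2$, so Lemma~\ref{lem4} gives $|\nabla_{\mathbb{H}} u_\varepsilon(\zeta)| \le C\varepsilon^{\frac{Q-2s}{2}}$ uniformly along $\Gamma_{\xi,\eta}$. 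Inserting this into the mean value inequality yields~\eqref{eqn3.11}.

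To obtain~\eqref{eqn3.12}, let $\xi, \eta \in \mathbb{H}^N\setminus B_r(0)$ and split into two cases. If $|\eta^{-1}\circ\xi| \le r_b$, then since $r_b \le 1$ we have $\min\{1,|\eta^{-1}\circ\xi|\} = |\eta^{-1}\circ\xi|$, and~\eqref{eqn3.11} applies verbatim. If $|\eta^{-1}\circ\xi| > r_b$, I use the triangle inequality and the pointwise bound of Lemma~\ref{eqn3.7} (valid here since $|\xi|,|\eta| \ge r$) to get
\[
|u_\varepsilon(\xi) - u_\varepsilon(\eta)| \le |u_\varepsilon(\xi)| + |u_\varepsilon(\eta)| \le 2C\varepsilon^{\frac{Q-2s}{2}}.
\]
In this case $\min\{1,|\eta^{-1}\circ\xi|\} > \min\{1,r_b\} = r_b$, so $1 \le r_b^{-1}\min\{1,|\eta^{-1}\circ\xi|\}$, whence the right-hand side is bounded by $2C r_b^{-1}\varepsilon^{\frac{Q-2s}{2}}\min\{1,|\eta^{-1}\circ\xi|\}$. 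Taking the larger of the constants from the two cases proves~\eqref{eqn3.12}.

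The hard part will be the first case: ensuring that the gradient estimate of Lemma~\ref{lem4} holds uniformly along the \emph{entire} curve joining $\xi$ and $\eta$, not merely at the endpoints. This is precisely what forces the introduction of the threshold $r_b$, and it rests on two structural facts recalled earlier, namely that the homogeneous norm $|\cdot|_{\mathbb{H}}$ satisfies the triangle inequality with constant $1$ and that it is comparable to the Carnot--Carath\'eodory distance entering the mean value theorem.
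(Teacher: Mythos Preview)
Your proposal is correct and follows essentially the same route as the paper: both arguments invoke the stratified mean value theorem, show that the intermediate points stay outside $B_{r/2}(0)$ so that the gradient bound of Lemma~\ref{lem4} applies, and then split part~(2) into the near and far cases exactly as you do. The only cosmetic difference is that you phrase the mean value theorem via a Carnot--Carath\'eodory geodesic with comparability constant $c_0$, whereas the paper cites the Bonfiglioli--Lanconelli--Uguzzoni version with a supremum over the ball $\{|\zeta|\le b\,|\eta^{-1}\circ\xi|\}$ and sets $r_b=r/2$ or $r/(2b)$ according to whether $b\le 1$ or $b>1$; your choice $r_b=\min\{1,r/(2c_0)\}$ plays the identical role and has the minor advantage of making the constant absorption in the far case of~\eqref{eqn3.12} explicit.
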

\begin{proof}
    By stratified Lagrange's Mean value theorem~\cite[Theorem 20.3.1]{bonfiglioli2007}, there exists positive constant $C$ and $b$ such that
\begin{equation}\label{l5e1}
    |u_{\varepsilon}(\xi) -u_{\varepsilon}(\eta)|\leq C |\eta^{-1} \circ \xi| \sup\limits_{|\zeta| \leq b  |\eta^{-1} \circ \xi|}|\nabla_{\mathbb{H}}u_\varepsilon(\eta \circ \zeta)|.
\end{equation}
If $b \leq 1,$ then for all $\xi \in \mathbb{H}^N, \,\eta \in \mathbb{H}^N \setminus B_r(0),$ and $|\eta^{-1} \circ \xi| \leq \frac{r}{2},$ it hold that
\begin{align*}
    |\eta \circ \zeta| &\geq |\eta| - |\zeta| \\
    &\geq r - b|\eta^{-1} \circ \xi| \\
    &\geq r - \frac{br}{2}\geq \frac{r}{2}.
\end{align*}
On choosing $\rho = \frac{r}{2},$ from Lemma \ref{lem4} and \eqref{l5e1}, we get 
\begin{equation*}
    |u_{\varepsilon}(\xi) -u_{\varepsilon}(\eta)|\leq C \varepsilon^\frac{Q-2s}{2}|\eta^{-1} \circ \xi|.
\end{equation*}
 If $b > 1,$ then for all  $\xi \in \mathbb{H}^N,\, \eta \in \mathbb{H}^N \setminus B_r(0),$ and  $|\eta^{-1} \circ \xi| \leq \frac{r}{2b},$ we have
\begin{align*}
    |\eta \circ \zeta| &\geq |\eta| - |\zeta| \\
    &\geq r - b|\eta^{-1} \circ \xi| \\
    &\geq r - \frac{br}{2b}\geq \frac{r}{2}.
\end{align*}
On choosing $\rho = \frac{r}{2},$ from Lemma \ref{lem4} and \eqref{l5e1}, we get
\begin{equation*}
    |u_{\varepsilon}(\xi) -u_{\varepsilon}(\eta)|\leq C \varepsilon^\frac{Q-2s}{2}|\eta^{-1} \circ \xi|.
\end{equation*}
Thus, \eqref{eqn3.11} holds with $ r_b=\begin{cases} \frac{r}{2b} & \text{if } b>1,\\  \frac{r}{2} & \text{if }b \leq 1.\end{cases} $

\noindent Now to prove \eqref{eqn3.12}, let $\xi, \eta \in  \mathbb{H}^N \setminus B_r(0).$ Then  \\
If $|\eta^{-1} \circ \xi| \leq r_b,$ we have,
\begin{equation*}
    |u_{\varepsilon}(\xi) -u_{\varepsilon}(\eta)|\leq C \varepsilon^\frac{Q-2s}{2}|\eta^{-1} \circ \xi|.
\end{equation*}
If $|\eta^{-1} \circ \xi| > r_b,$ we have,
\begin{equation*}
    |u_{\varepsilon}(\xi) -u_{\varepsilon}(\eta)|\leq |u_{\varepsilon}(\xi)| + |u_{\varepsilon}(\eta)| \leq C \varepsilon^\frac{Q-2s}{2}.
\end{equation*}
Therefore, for $\xi ,\eta \in  \mathbb{H}^N \setminus B_r(0),$
\begin{equation*}
    |u_{\varepsilon}(\xi) -u_{\varepsilon}(\eta)|\leq C \varepsilon^\frac{Q-2s}{2}\min\{1,|\eta^{-1} \circ \xi|\}.
\end{equation*}
\end{proof}

\begin{lemma} \label{lem6}
Let $s \in (0, 1)$. Then the following estimate holds:
\begin{equation*}
    \iint\limits_{\mathbb{H}^{N} \ \mathbb{H}^{N}}  \frac{ |u_{\varepsilon}(\xi) -  u_{\varepsilon}(\eta)|^2}{|\eta^{-1} \circ \xi|^{Q+2s}} d\xi d\eta \leq S_s^{\frac{Q}{2s}} + O(\varepsilon^{Q-2s})
\end{equation*}
as $\varepsilon \to 0$.
\end{lemma}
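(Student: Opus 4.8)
The plan is to benchmark $[u_\varepsilon]^2$ against the exact value $[U_\varepsilon]^2 = S_s^{Q/2s}$ furnished by the previous lemma, and to show that truncating by $\phi$ costs only $O(\varepsilon^{Q-2s})$. Setting $w = U_\varepsilon - u_\varepsilon = (1-\phi)U_\varepsilon$, which lies in $HW^{s,2}(\mathbb{H}^N)$ and is supported in $\mathbb{H}^N\setminus B_r(0)$, I would expand the Gagliardo form bilinearly,
\[
[u_\varepsilon]^2 = [U_\varepsilon]^2 - 2\,\mathcal{B}(U_\varepsilon,w) + [w]^2,
\]
where $\mathcal{B}(f,g) = \iint_{\mathbb{H}^N\times\mathbb{H}^N}\frac{(f(\xi)-f(\eta))(g(\xi)-g(\eta))}{|\eta^{-1}\circ\xi|^{Q+2s}}\,d\xi\,d\eta$. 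The problem then splits into controlling the cross term and the quadratic term separately.

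For the cross term I would use that $U_\varepsilon$ is a weak solution of \eqref{eqn3.2}: taking the test function $v=w$ in its weak formulation gives $\mathcal{B}(U_\varepsilon,w) = \int_{\mathbb{H}^N}|U_\varepsilon|^{Q^*_s-2}U_\varepsilon\,w\,d\xi = \int_{\mathbb{H}^N}U_\varepsilon^{Q^*_s}(1-\phi)\,d\xi \ge 0$, since $U_\varepsilon>0$. As $1-\phi$ is supported in $\mathbb{H}^N\setminus B_r(0)$ and bounded by $1$, this is at most $\int_{\mathbb{H}^N\setminus B_r(0)}U_\varepsilon^{Q^*_s}\,d\xi$; rescaling by $\xi=\delta_\varepsilon\xi'$ and using $U_\varepsilon^{Q^*_s}(\delta_\varepsilon\xi') = \varepsilon^{-Q}(u^*)^{Q^*_s}(\xi')$ together with the decay $u^*(\xi')\sim|\xi'|^{-(Q-2s)}$ reduces it to $\int_{|\xi'|>r/\varepsilon}(u^*)^{Q^*_s}\,d\xi' = O(\varepsilon^{Q})$, which is negligible against $\varepsilon^{Q-2s}$. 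Because it enters with a minus sign, $[u_\varepsilon]^2 \le S_s^{Q/2s} + [w]^2$, and it remains only to prove $[w]^2 = O(\varepsilon^{Q-2s})$.

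The bulk of the work is this last estimate. Since $w$ vanishes on $B_r(0)$, the integrand of $[w]^2$ is supported off $B_r(0)\times B_r(0)$, so by symmetry it suffices to bound $2\iint_{\eta\notin B_r,\ \xi\in\mathbb{H}^N}$. On the near-diagonal piece $\{|\eta^{-1}\circ\xi|\le r_b\}$ I would apply the stratified Lagrange mean value theorem exactly as in the derivations of \eqref{eqn3.11}--\eqref{eqn3.12}, but crucially retaining the position-dependent gradient bound $|\nabla_{\mathbb{H}}U_\varepsilon(\zeta)| \le C\varepsilon^{\frac{Q-2s}{2}}|\zeta|^{-(Q-2s+1)}$ produced inside the proof of Lemma~\ref{lem4} rather than its uniform simplification; this yields $|w(\xi)-w(\eta)|\le C\varepsilon^{\frac{Q-2s}{2}}|\eta|^{-(Q-2s+1)}|\eta^{-1}\circ\xi|$, and the resulting integral factors as $\varepsilon^{Q-2s}\int_{|\eta|>r}|\eta|^{-2(Q-2s+1)}\,d\eta\int_{|\zeta|\le r_b}|\zeta|^{2-Q-2s}\,d\zeta$, both factors converging (the inner radial exponent being $1-2s>-1$ precisely because $s<1$), hence $O(\varepsilon^{Q-2s})$. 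On the off-diagonal piece I would again invoke the refined pointwise decay $|U_\varepsilon(\xi)|\le C\varepsilon^{\frac{Q-2s}{2}}|\xi|^{-(Q-2s)}$ extracted from the proof of Lemma~\ref{eqn3.7}, being careful to keep the difference $U_\varepsilon(\xi)-U_\varepsilon(\eta)$ intact rather than splitting it into $|U_\varepsilon(\xi)|^2+|U_\varepsilon(\eta)|^2$.

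The crux, and the step I expect to be the main obstacle, is exactly this tail region. Because $w$ agrees with the slowly decaying bubble $U_\varepsilon$ all the way to infinity and is not compactly supported, the crude uniform bounds $|w|\le C\varepsilon^{\frac{Q-2s}{2}}$ and the unrefined forms of \eqref{eqn3.11}--\eqref{eqn3.12} make the $\eta$-integral diverge, while naively replacing $|U_\varepsilon(\xi)-U_\varepsilon(\eta)|^2$ by $2|U_\varepsilon(\xi)|^2+2|U_\varepsilon(\eta)|^2$ destroys the cancellation and yields the wrong rate $\varepsilon^{2s}$ (together with a spurious restriction $Q>4s$). The estimate must instead exploit the genuine decay in $\eta$ and the cancellation in the difference. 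The cleanest bookkeeping is the scaling $\xi=\delta_\varepsilon\xi'$, $\eta=\delta_\varepsilon\eta'$, under which the prefactors cancel exactly and $[w]^2 = [(1-\phi_\varepsilon)u^*]^2$ with $\phi_\varepsilon(\xi')=\phi(\delta_\varepsilon\xi')\equiv 1$ on $B_{r/\varepsilon}(0)$; that is, $[w]^2$ is the Gagliardo energy of the fixed profile $u^*$ truncated outside $B_{r/\varepsilon}(0)$, whose tail is $O\big((r/\varepsilon)^{-(Q-2s)}\big)=O(\varepsilon^{Q-2s})$ for every $s\in(0,1)$. Combining the cross-term bound $O(\varepsilon^{Q})$ with $[w]^2=O(\varepsilon^{Q-2s})$ then gives $[u_\varepsilon]^2\le S_s^{Q/2s}+O(\varepsilon^{Q-2s})$, as claimed.
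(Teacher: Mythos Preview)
Your approach is correct but genuinely different from the paper's. The paper never introduces $w=(1-\phi)U_\varepsilon$ or expands bilinearly; instead it partitions $\mathbb{H}^N\times\mathbb{H}^N$ into four regions according to whether $\xi,\eta$ lie in $B_r(0)$ and whether $|\eta^{-1}\circ\xi|$ exceeds $r_b$. On $B_r\times B_r$ one has $u_\varepsilon=U_\varepsilon$; on $B_r^c\times B_r^c$ and on the near-diagonal cross piece the paper uses only the \emph{uniform} bounds of Lemmas~\ref{eqn3.7}--\ref{lem4} (i.e.\ \eqref{eqn3.11}--\eqref{eqn3.12}) together with the compact support of $u_\varepsilon$ in $B_{2r}(0)$, which is what keeps all outer integrals finite without any position-dependent decay. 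On the far cross piece $D$ the paper writes $|u_\varepsilon(\xi)-u_\varepsilon(\eta)|^2\le |U_\varepsilon(\xi)-U_\varepsilon(\eta)|^2+\text{(errors)}$, bounds the error terms by $O(\varepsilon^{Q-2s})$, and then simply dominates $\int_{B_r\times B_r}+2\iint_D$ by $\iint_{\mathbb{H}^N\times\mathbb{H}^N}|U_\varepsilon-U_\varepsilon|^2/\ldots=S_s^{Q/2s}$.

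Your route trades one difficulty for another: using the equation to get $\mathcal{B}(U_\varepsilon,w)\ge 0$ dispatches the cross term for free (indeed with the sharper rate $O(\varepsilon^{Q})$), but then $[w]^2$ must be estimated for a function supported on all of $\mathbb{H}^N\setminus B_r(0)$, which forces you to dig back into the proofs of Lemmas~\ref{eqn3.7} and~\ref{lem4} for the refined bounds $|U_\varepsilon(\xi)|\lesssim\varepsilon^{(Q-2s)/2}|\xi|^{-(Q-2s)}$ and $|\nabla_{\mathbb{H}}U_\varepsilon(\xi)|\lesssim\varepsilon^{(Q-2s)/2}|\xi|^{-(Q-2s+1)}$. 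The paper avoids this entirely because $u_\varepsilon$ is compactly supported. One remark: your worry about needing to ``keep the difference intact'' and about a ``spurious restriction $Q>4s$'' is unfounded here---on $\mathbb{H}^N$ one always has $Q=2N+2\ge 4>4s$, so the crude splitting $|w(\xi)-w(\eta)|^2\le 2|w(\xi)|^2+2|w(\eta)|^2$ combined with the pointwise decay already gives a convergent integral of size $O(\varepsilon^{Q-2s})$ on the off-diagonal piece, and no cancellation argument is required.
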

\begin{proof}
Let \begin{align*}
    \begin{split}
        D&=\{(\xi, \eta) \in \mathbb{H}^{N} \times \mathbb{H}^{N} : \xi \in B_r(0), \eta \in \mathbb{H}^N \setminus B_r(0), |\eta^{-1} \circ \xi| > r_b \} \text{ and } \\
E &= \{(\xi, \eta) \in \mathbb{H}^{N} \times \mathbb{H}^{N} : \xi \in B_r(0), \eta \in \mathbb{H}^N \setminus B_r(0), |\eta^{-1} \circ \xi| \leq r_b \}.
    \end{split}
\end{align*}
Then
\begin{align*}
\begin{split}
     &\iint\limits_{\mathbb{H}^{N} \ \mathbb{H}^{N}}  \frac{ |u_{\varepsilon}(\xi) -  u_{\varepsilon}(\eta)|^2}{|\eta^{-1} \circ \xi|^{Q+2s}} d\xi d\eta =  \int\limits_{B_r} \int\limits_{B_r} \frac{ |U_{\varepsilon}(\xi) -  U_{\varepsilon}(\eta)|^2}{|\eta^{-1} \circ \xi|^{Q+2s}} d\xi d \eta \\& + 2 \iint\limits_{D} \frac{ |u_{\varepsilon}(\xi) -  u_{\varepsilon}(\eta)|^2}{|\eta^{-1} \circ \xi|^{Q+2s}} d\xi d\eta  
      + 2 \iint\limits_{E} \frac{ |u_{\varepsilon}(\xi) -  u_{\varepsilon}(\eta)|^2}{|\eta^{-1} \circ \xi|^{Q+2s}} d\xi d\eta \\& +  \int\limits_{B^c_r(0)} \int\limits_{B^c_r(0)} \frac{ |u_{\varepsilon}(\xi) -  u_{\varepsilon}(\eta)|^2}{|\eta^{-1} \circ \xi|^{Q+2s}} d\xi d\eta.
\end{split} 
\end{align*}
Now, we see that
\begin{align*}
    \int\limits_{B^c_r(0)} \int\limits_{B^c_r(0)} \frac{ |u_{\varepsilon}(\xi) -  u_{\varepsilon}(\eta)|^2}{|\eta^{-1} \circ \xi|^{Q+2s}} d\xi d\eta &\leq C \varepsilon^{Q-2s}\int\limits_{B^c_r(0)} \int\limits_{B^c_r(0)}  \frac{  \min\{1,|\eta^{-1} \circ \xi|^2\}}{|\eta^{-1} \circ \xi|^{Q+2s}} d\eta d\xi \\
    &\leq C \varepsilon^{Q-2s}\int\limits_{ B_{2r}(0)} \int\limits_{\mathbb{H}^{N}}  \frac{  \min\{1,|\eta^{-1} \circ \xi|^2\}}{|\eta^{-1} \circ \xi|^{Q+2s}} d\eta d\xi\\
    &= O(\varepsilon^{Q-2s}).
\end{align*}

\begin{align*}
    \iint\limits_{E} \frac{ |u_{\varepsilon}(\xi) -  u_{\varepsilon}(\eta)|^2}{|\eta^{-1} \circ \xi|^{Q+2s}}d\eta d\xi  &\leq  \int\limits_{B_r(0)} \int\limits_{B_r^c \cap \{|\eta^{-1} \circ \xi| \leq r_b\}} \frac{ |u_{\varepsilon}(\xi) -  u_{\varepsilon}(\eta)|^2}{|\eta^{-1} \circ \xi|^{Q+2s}} d\eta d\xi \\
    &\leq C \varepsilon^{Q-2s} \int\limits_{B_r(0)} \int\limits_{B_r^c \cap \{|\eta^{-1} \circ \xi| \leq r_b\}} \frac{ |\eta^{-1} \circ \xi|^2}{|\eta^{-1} \circ \xi|^{Q+2s}} d\eta d\xi \\
    &\leq C \varepsilon^{Q-2s} \int\limits_{B_r(0)} \int\limits_{B_r^c \cap \{|\eta^{-1} \circ \xi| \leq r_b\}} \frac{ 1}{|\eta^{-1} \circ \xi|^{Q+2s-2}} d\eta d\xi \\
    &\leq C \varepsilon^{Q-2s} \int\limits_{B_r(0)} \int\limits_{|\zeta
    |\leq r_b} \frac{ 1}{|\zeta|^{Q+2s-2}} d\zeta d\xi \\
    &= O(\varepsilon^{Q-2s}).
\end{align*}
Since \( u_\varepsilon(\xi) = U_\varepsilon(\xi) \) for all \( \xi \in B_r(0) \), it follows that for any \( (\xi, \eta) \in D \),
\begin{align*}
    |u_\varepsilon(\xi) - u_\varepsilon(\eta)|^2 &= |U_\varepsilon(\xi) - u_\varepsilon(\eta)|^2 \\
    &= |(U_\varepsilon(\xi) - U_\varepsilon(\eta)) + (U_\varepsilon(\eta) - u_\varepsilon(\eta))|^2 \\
    &\leq |U_\varepsilon(\xi) - U_\varepsilon(\eta)|^2 + |U_\varepsilon(\eta) - u_\varepsilon(\eta)|^2 \\
    &\quad+ 2|U_\varepsilon(\xi) - U_\varepsilon(\eta)| |U_\varepsilon(\eta) - u_\varepsilon(\eta)|.
\end{align*}
Therefore,
\begin{align*}
     &\iint\limits_{D} \frac{ |u_{\varepsilon}(\xi) -  u_{\varepsilon}(\eta)|^2}{|\eta^{-1} \circ \xi|^{Q+2s}}d\eta d\xi \leq \iint\limits_{D} \frac{|U_\varepsilon(\xi) - U_\varepsilon(\eta)|^2 }{|\eta^{-1} \circ \xi|^{Q+2s}}d\eta d\xi \\& + \iint\limits_{D} \frac{ |U_\varepsilon(\eta) - u_\varepsilon(\eta)|^2}{|\eta^{-1} \circ \xi|^{Q+2s}}d\eta d\xi + 2\iint\limits_{D} \frac{ ||U_\varepsilon(\xi) - U_\varepsilon(\eta)| |U_\varepsilon(\eta) - u_\varepsilon(\eta)|}{|\eta^{-1} \circ \xi|^{Q+2s}}d\eta d\xi.
\end{align*}
Consider,
\begin{align*}
     \iint\limits_{D} \frac{ |U_\varepsilon(\eta) - u_\varepsilon(\eta)|^2}{|\eta^{-1} \circ \xi|^{Q+2s}}d\eta d\xi &\leq 4\iint\limits_{D} \frac{ |U_\varepsilon(\eta)|^2}{|\eta^{-1} \circ \xi|^{Q+2s}}d\eta d\xi \\
     &\leq C \varepsilon^{Q-2s} \int\limits_{B_r(0)} \int\limits_{B_r^c \cap \{|\eta^{-1} \circ \xi| > r_b\}} \frac{1}{|\eta^{-1} \circ \xi|^{Q+2s}} d\eta d\xi \\
     &\leq C \varepsilon^{Q-2s} \int\limits_{B_r(0)} \int\limits_{B_r^c} \frac{1}{|\zeta|^{Q+2s}} d\zeta d\xi \\
      &= O( \varepsilon^{Q-2s}).
\end{align*}
Now for $(\xi,  \eta) \in D,$ we have,
\begin{align*}
    |U_\varepsilon(\xi) ||U_\varepsilon(\eta)| &\leq C \varepsilon^\frac{Q-2s}{2} \frac{C\varepsilon^\frac{-(Q-2s)}{2}}{\left( \frac{t^2}{{\varepsilon^4 S_s^{\frac{2}{s}}}} + \left(1 + \frac{|x|^2}{\varepsilon^2 S_s^{\frac{1}{s}}} + \frac{|y|^2}{\varepsilon^2 S_s^{\frac{1}{s}}}\right)^2  \right)^{\frac{Q-2s}{4}}} \\
    &\leq \frac{C}{\left( \frac{t^2}{{\varepsilon^4 S_s^{\frac{2}{s}}}} + \left(1 + \frac{|x|^2}{\varepsilon^2 S_s^{\frac{1}{s}}} + \frac{|y|^2}{\varepsilon^2 S_s^{\frac{1}{s}}}\right)^2  \right)^{\frac{Q-2s}{4}}}.
\end{align*}
Let \( \xi' = \delta_{\frac{1}{\varepsilon S_s^{\frac{1}{2s}}}}(\xi) \). Then the change of variables yields \( d\xi = \varepsilon^Q S_s^{\frac{Q}{2s}}\, d\xi' \), and the condition \( |\xi| = r \) implies \( |\xi'| = \frac{r}{\varepsilon S_s^{\frac{1}{2s}}} = r_\varepsilon \). Therefore, applying this transformation along with polar co-ordinates on \( \mathbb{H}^N \) (see \cite[Proposition 1.15]{folland1982hardy}), we obtain,
\begin{align*}
    \iint\limits_D \frac{|U_\varepsilon(\xi)| |U_\varepsilon(\eta)|}{|\eta^{-1} \circ \xi|^{Q+2s}} d\xi  d\eta &\leq C \iint\limits_D \frac{\left( \frac{t^2}{{\varepsilon^4 S_s^{\frac{2}{s}}}} + \left(1 + \frac{|x|^2}{\varepsilon^2 S_s^{\frac{1}{s}}} + \frac{|y|^2}{\varepsilon^2 S_s^{\frac{1}{s}}}\right)^2  \right)^{-\frac{Q-2s}{4}}}{|\eta^{-1} \circ \xi|^{Q+2s}} d\xi d\eta  \\
    &\leq  C \iint\limits_D \frac{1}{\left( 1 + |\delta_\frac{1}{\varepsilon S_{s}^{\frac{1}{2s}}}(\xi)|^4  \right)^{\frac{Q-2s}{4}}} \frac{1}{|\eta^{-1} \circ \xi|^{Q+2s}}  d\xi d\eta \\
    &\leq  C \int\limits_{B_r(0)} \frac{1}{\left( 1 + |\delta_\frac{1}{\varepsilon S_{s}^{\frac{1}{2s}}}(\xi)|^4  \right)^{\frac{Q-2s}{4}}} d \xi \\
    &\leq C \varepsilon^Q \int\limits_{B_{r_\varepsilon}} \frac{1}{\left( 1 + |\xi'|^4  \right)^{\frac{Q-2s}{4}}} d \xi' \\
    &\leq C \varepsilon^Q \left( 1 + \int\limits_{B_{r_\varepsilon} \setminus B_1} \frac{1}{\left( 1 + |\xi'|^4  \right)^{\frac{Q-2s}{4}}} d \xi' \right) \\
    &\leq C \varepsilon^Q \left( 1 + \int\limits_{B_{r_\varepsilon} \setminus B_1} \frac{1}{|\xi'|^{Q-2s}} d \xi' \right) \\
     &\leq C \varepsilon^Q \left( 1 + \int\limits_1^{r_\varepsilon} \frac{1}{\omega^{Q-2s}} w^{Q-1} d \omega \right) \\
     &\leq C \varepsilon^Q \left( 1 + \int\limits_1^{r_\varepsilon} {\omega^{-Q+2s+Q-1}} d \omega \right) \\
     &\leq C \varepsilon^Q \left( 1 + \left(\frac{r}{\varepsilon S_{s}^{\frac{1}{2s}}}\right)^{2s}\right) \\
     &\leq C \varepsilon^{Q - 2s}.
\end{align*}
Moreover, 
\begin{align*}
    \iint\limits_D \frac{|U_\varepsilon(\xi)| |U_\varepsilon(\eta) - u_\varepsilon(\eta)|}{|\eta^{-1} \circ \xi|^{Q+2s}} d\xi  d\eta  &\leq \iint\limits_D \frac{|U_\varepsilon(\xi)| (|U_\varepsilon(\eta)| + |u_\varepsilon(\eta)|)}{|\eta^{-1} \circ \xi|^{Q+2s}} d\xi  d\eta \\
    &\leq 2 \iint\limits_D \frac{|U_\varepsilon(\xi)| |U_\varepsilon(\eta)|}{|\eta^{-1} \circ \xi|^{Q+2s}} d\xi  d\eta \\
    &= O(\varepsilon^{Q - 2s}).
\end{align*}
Further,
\begin{align*}
    \iint\limits_D \frac{|U_\varepsilon(\eta)| |U_\varepsilon(\eta) - u_\varepsilon(\eta)|}{|\eta^{-1} \circ \xi|^{Q+2s}} d\xi  d\eta  &\leq 2   \iint\limits_D \frac{|U_\varepsilon(\eta)|^2 }{|\eta^{-1} \circ \xi|^{Q+2s}} d\xi  d\eta \\
    &\leq C  \iint\limits_D \frac{\varepsilon^{Q - 2s}}{|\eta^{-1} \circ \xi|^{Q+2s}} d\xi  d\eta \\
    &\leq C \varepsilon^{Q - 2s}. 
\end{align*}
Therefore,
\begin{align*}
    \iint\limits_{\mathbb{H}^{N} \ \mathbb{H}^{N}}  \frac{ |u_{\varepsilon}(\xi) -  u_{\varepsilon}(\eta)|^2}{|\eta^{-1} \circ \xi|^{Q+2s}} d\xi d\eta &=  \int\limits_{B_r(0)} \int\limits_{B_r(0)} \frac{ |U_{\varepsilon}(\xi) -  U_{\varepsilon}(\eta)|^2}{|\eta^{-1} \circ \xi|^{Q+2s}} d\xi d \eta \\
    &\quad + 2  \iint\limits_{D} \frac{ |U_\varepsilon(\eta) - u_\varepsilon(\eta)|^2}{|\eta^{-1} \circ \xi|^{Q+2s}}d\eta d\xi + O(\varepsilon^{Q-2s}) \\
    &\leq \iint\limits_{\mathbb{H}^{N} \ \mathbb{H}^{N}}  \frac{ |U_{\varepsilon}(\xi) -  U_{\varepsilon}(\eta)|^2}{|\eta^{-1} \circ \xi|^{Q+2s}} d\xi d\eta + O(\varepsilon^{Q-2s}) \\
    &\leq S_s^{\frac{Q}{2s}} + O(\varepsilon^{Q-2s}).
\end{align*}
\end{proof}

\begin{lemma} \label{lem7}
    Let $s \in (0,1).$ Then, 
    \begin{equation*}
        \int\limits_{\mathbb{H}^N} |u_\varepsilon(\xi)|^2 d \xi \geq  O(\varepsilon^{Q-4s}) + C\varepsilon^{2s}
    \end{equation*}
    and
    \begin{equation*}
        \int\limits_{\mathbb{H}^N} |u_\varepsilon(\xi)|^{Q^*_s} d\xi =  S_s^{\frac{Q}{2s}} + O( \varepsilon^Q).
    \end{equation*}
\end{lemma}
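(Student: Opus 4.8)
The plan is to derive both estimates by comparing $u_\varepsilon = U_\varepsilon\,\phi$ with the unperturbed profile $U_\varepsilon$, using three facts: $u_\varepsilon \equiv U_\varepsilon$ on $B_r(0)$, $u_\varepsilon \equiv 0$ outside $B_{2r}(0)$, and $0 \le \phi \le 1$; the exact normalization $\int_{\mathbb{H}^N}|U_\varepsilon|^{Q^*_s}\,d\xi = S_s^{Q/(2s)}$ from the preceding lemma; and the explicit form of $U_\varepsilon$ in \eqref{Ue}. The computational engine in both cases is the anisotropic change of variables $\xi' = \delta_{1/(\varepsilon S_s^{1/(2s)})}(\xi)$, under which $d\xi = \varepsilon^{Q} S_s^{Q/(2s)}\,d\xi'$ and $B_\rho(0)$ is sent to $B_{\rho/(\varepsilon S_s^{1/(2s)})}(0)$, together with integration in polar coordinates on $\mathbb{H}^N$ (as in \cite[Proposition 1.15]{folland1982hardy}), where the radial measure scales like $\omega^{Q-1}\,d\omega$.

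For the critical-exponent identity I would first split
\[
\int_{\mathbb{H}^N}|u_\varepsilon|^{Q^*_s}\,d\xi = \int_{B_r(0)}|U_\varepsilon|^{Q^*_s}\,d\xi + \int_{B_{2r}(0)\setminus B_r(0)}|U_\varepsilon\,\phi|^{Q^*_s}\,d\xi,
\]
and rewrite the first term as $S_s^{Q/(2s)} - \int_{B_r^c(0)}|U_\varepsilon|^{Q^*_s}\,d\xi$. Since $0\le\phi\le1$, the annulus term and the tail $\int_{B_r^c(0)}|U_\varepsilon|^{Q^*_s}\,d\xi$ are both dominated by the latter tail. Applying the change of variables to it, the power weight cancels exactly because $\tfrac{Q-2s}{2}Q^*_s = Q$, reducing everything to $C\int_{B_{r_\varepsilon}^c(0)}(1+|\xi'|^4)^{-Q/2}\,d\xi'$ with $r_\varepsilon = r/(\varepsilon S_s^{1/(2s)})$; a polar-coordinate estimate gives $\int_{r_\varepsilon}^\infty \omega^{-2Q}\omega^{Q-1}\,d\omega = C\,r_\varepsilon^{-Q} = O(\varepsilon^Q)$. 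Assembling the pieces yields $\int_{\mathbb{H}^N}|u_\varepsilon|^{Q^*_s}\,d\xi = S_s^{Q/(2s)} + O(\varepsilon^Q)$.

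For the $L^2$ lower bound, since $\phi\equiv1$ on $B_r(0)$ and the integrand is nonnegative, I would discard everything outside $B_r(0)$, so that $\int_{\mathbb{H}^N}|u_\varepsilon|^2\,d\xi \ge \int_{B_r(0)}|U_\varepsilon|^2\,d\xi$. The same change of variables now produces the prefactor $\varepsilon^{\,Q-(Q-2s)} = \varepsilon^{2s}$, leaving
\[
\int_{B_r(0)}|U_\varepsilon|^2\,d\xi = C\varepsilon^{2s}\int_{B_{r_\varepsilon}(0)}\big((t')^2+(1+|x'|^2+|y'|^2)^2\big)^{-(Q-2s)/2}\,d\xi'.
\]
Because $Q = 2N+2 \ge 4 > 4s$ for every $s\in(0,1)$, the model integrand decays like $|\xi'|^{-2(Q-2s)}$ with $2(Q-2s) > Q$, so $\int_{\mathbb{H}^N}(\cdots)^{-(Q-2s)/2}\,d\xi' =: c_0$ is finite and positive. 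Writing the integral over $B_{r_\varepsilon}(0)$ as $c_0$ minus its tail and bounding the tail in polar coordinates by $\int_{r_\varepsilon}^\infty \omega^{-2(Q-2s)}\omega^{Q-1}\,d\omega = C\,r_\varepsilon^{4s-Q} = O(\varepsilon^{Q-4s})$ isolates the leading term $C\varepsilon^{2s}$ together with the lower-order correction, which delivers the stated bound $\int_{\mathbb{H}^N}|u_\varepsilon|^2\,d\xi \ge C\varepsilon^{2s} + O(\varepsilon^{Q-4s})$.

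I expect the main obstacle to be bookkeeping rather than a deep idea: correctly tracking the homogeneity exponents through the dilation $\delta_\lambda$ (which acts quadratically on the $t$-variable) and through the non-Euclidean polar-coordinate formula, and checking that the correction in the $L^2$ estimate is genuinely subordinate to the main term. The decisive structural point, which must be verified explicitly, is the convergence of the model integral, equivalently the inequality $Q > 4s$; this is exactly what guarantees that the leading contribution to $\|u_\varepsilon\|_{L^2}^2$ is positive and of order $\varepsilon^{2s}$, the ingredient later needed to push the energy level strictly below the compactness threshold $\tfrac{s}{Q}S_s^{Q/(2s)}$ in the Brezis--Nirenberg argument.
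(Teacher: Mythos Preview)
Your proposal is correct and follows essentially the same approach as the paper's proof. For the $L^{Q^*_s}$ identity both arguments reduce the error to the tail $\int_{B_r^c}|U_\varepsilon|^{Q^*_s}$ and scale it to $O(\varepsilon^Q)$; for the $L^2$ lower bound both drop to $\int_{B_r}|U_\varepsilon|^2$, rescale to extract the factor $\varepsilon^{2s}$, and control the remaining integral in polar coordinates---the only cosmetic difference being that the paper integrates directly over an annulus $B_{r/\varepsilon}\setminus B_R$, whereas you write the ball integral as the finite full-space integral minus its tail (both computations relying on $Q>4s$).
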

\begin{proof}
Define \( \xi' = \delta_{\frac{1}{\varepsilon}}(\xi) \). Then, under this change of variables, we have \( d\xi = \varepsilon^Q\, d\xi' \), and the condition \( |\xi| = r \) implies \( |\xi'| = \frac{r}{\varepsilon} \). Using this transformation, we obtain:
    \begin{align*}
    \int\limits_{\mathbb{H}^N} |u_\varepsilon(\xi)|^2  d \xi &= \int\limits_{B_r(0)} |U_\varepsilon(\xi)|^2 d \xi + \int\limits_{B_{2r} \setminus B_r(0)} |(\phi(\xi)U_\varepsilon(\xi))|^2 d \xi \\
    &\geq  \varepsilon^{-(Q-2s)} \int\limits_{B_r} |u^*( \delta_{\frac{1}{\varepsilon}}(\xi))|^2 d \xi\\
    &\geq  \varepsilon^{2s} \int\limits_{B_{\frac{r}{\varepsilon}}} |u^*(\xi'))|^2 d \xi', \\
    &\geq  \varepsilon^{2s} \int\limits_{B_{\frac{r}{\varepsilon}} \setminus B_R} |u^*(\xi'))|^2 d \xi' 
\end{align*}
for any $0 < R < \frac{r}{\varepsilon}.$ By using polar co-ordinates on $\mathbb{H}^N$(see \cite[Proposition 1.15]{folland1982hardy}), we get
\begin{align*}
    \int\limits_{\mathbb{H}^N} |u_\varepsilon(\xi)|^2  d \xi &\geq \varepsilon^{2s} \int\limits_{B_{\frac{r}{\varepsilon}} \setminus B_R} \frac{C}{\left( \frac{t^2}{{ S_s^{\frac{2}{s}}}} + \left(1 + \frac{|x|^2}{ S_s^{\frac{1}{s}}} + \frac{|y|^2}{ S_s^{\frac{1}{s}}}\right)^2  \right)^{\frac{Q-2s}{2}}} \\
    &\eqsim C \varepsilon^{2s} \int\limits_R^{\frac{r}{\varepsilon}} \frac{\omega^{Q-1}}{\omega^{2Q-4s}} d \omega \\
    &= C \varepsilon^{2s} \int\limits_R^{\frac{r}{\varepsilon}} \omega^{-Q+4s-1} d \omega \\
    &= C \varepsilon^{2s} \left(\varepsilon^{Q-4s} - R^{-Q+4s} \right)\\
    &= O(\varepsilon^{Q-4s}) + C\varepsilon^{2s}.
\end{align*}
Moreover,
\begin{align*}
  \int\limits_{\mathbb{H}^N} |u_\varepsilon(\xi)|^{Q^*_s}   d\xi &= \int\limits_{\mathbb{H}^N} |U_\varepsilon(\xi)|^{Q^*_s}   d\xi + \int\limits_{\mathbb{H}^N} \left( | \phi(\xi) |^{Q^*_s} - 1 \right) |U_\varepsilon(\xi)|^{Q^*_s} d\xi \\
  &= S_s^{\frac{Q}{2s}}  + \int\limits_{B^c_r(0)} \left( | \phi(\xi) |^{Q^*_s} - 1 \right) |U_\varepsilon(\xi)|^{Q^*_s}   d\xi \\
  &= S_s^{\frac{Q}{2s}}  +  \varepsilon^{-Q}\int\limits_{B^c_r(0)} \left( | \phi(\xi) |^{Q^*_s} - 1 \right) | u^*(\delta_{\frac{1}{\varepsilon}}(\xi)) |^{Q^*_s}   d\xi \\
  &= S_s^{\frac{Q}{2s}}  +  \varepsilon^{-Q}\int\limits_{B^c_r(0)} \frac{C}{\left( \frac{t^2}{{\varepsilon^4 S_s^{\frac{2}{s}}}} + \left(1 + \frac{|x|^2}{\varepsilon^2 S_s^{\frac{1}{s}}} + \frac{|y|^2}{\varepsilon^2 S_s^{\frac{1}{s}}}\right)^2  \right)^{\frac{Q}{2}}}  d\xi \\
  &\eqsim S_s^{\frac{Q}{2s}}  + C  \varepsilon^{Q} \int\limits_{B^c_r(0)} |\xi|^{-2Q}   d\xi \\
  &= S_s^{\frac{Q}{2s}} + O( \varepsilon^Q).
\end{align*}
\end{proof}

\begin{lemma}\label{lem3.9}
Let $\lambda>0.$ Then $S_{s,\lambda}<S_s.$ Moreover, the infimum 
$$S_{s, \lambda} = \inf\limits_{\substack{u \in X_0 \\ \|u\|_{L^{Q^*_s}(\Omega)}= 1} }\  [u]^2 - \lambda\|u\|_{L^2(\mathbb{H}^N)}^2$$
is achieved.
\end{lemma}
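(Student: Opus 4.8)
The plan is to handle the two assertions separately: first the strict inequality $S_{s,\lambda}<S_s$, established by testing the Rayleigh quotient against the concentrating functions $u_\varepsilon$ from \eqref{ue}; and then the attainment of $S_{s,\lambda}$ by the direct method, where the failure of compactness at the critical exponent is overcome precisely by the strict inequality just obtained. Throughout I use that the constrained and quotient formulations of $S_{s,\lambda}$ coincide, since the quotient
\[
Q_\lambda(u)=\frac{[u]^2-\lambda\|u\|_{L^2(\mathbb{H}^N)}^2}{\|u\|_{L^{Q^*_s}(\Omega)}^2}
\]
is $0$-homogeneous, so $S_{s,\lambda}=\inf_{u\in X_0\setminus\{0\}}Q_\lambda(u)$.

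For the strict inequality I would insert the admissible competitor $u_\varepsilon\in C_c^\infty(\Omega)\subset X_0$ into $Q_\lambda$. Combining Lemma \ref{lem6}, which gives $[u_\varepsilon]^2\le S_s^{Q/2s}+O(\varepsilon^{Q-2s})$, with Lemma \ref{lem7}, which gives $\int_{\mathbb{H}^N}|u_\varepsilon|^{Q^*_s}\,d\xi=S_s^{Q/2s}+O(\varepsilon^{Q})$ together with the crucial lower bound that the $L^2$-mass is of order at least $\varepsilon^{2s}$, and using $\tfrac{2}{Q^*_s}=\tfrac{Q-2s}{Q}$ to expand the denominator, I expect to arrive at
\[
Q_\lambda(u_\varepsilon)\le S_s-c\,\lambda\,\varepsilon^{2s}+O(\varepsilon^{Q-2s})
\]
for some constant $c>0$. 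Since $Q=2N+2\ge 4>4s$ for every $s\in(0,1)$, we have $2s<Q-2s$, so the negative gain $-c\lambda\varepsilon^{2s}$ dominates the positive error $O(\varepsilon^{Q-2s})$ as $\varepsilon\to0$. Choosing $\varepsilon$ small thus produces a competitor with quotient strictly below $S_s$, whence $S_{s,\lambda}<S_s$. This dimension inequality holding automatically is exactly why Theorem \ref{thm1} carries no dimension restriction.

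For attainment I would take a minimizing sequence $\{u_n\}\subset X_0$ with $\|u_n\|_{L^{Q^*_s}(\Omega)}=1$ and $[u_n]^2-\lambda\|u_n\|_{L^2}^2\to S_{s,\lambda}$. Boundedness in $X_0$ is immediate: Hölder's inequality on the bounded set $\Omega$ gives $\|u_n\|_{L^2}^2\le|\Omega|^{2s/Q}$ on the constraint set, so $[u_n]^2=(S_{s,\lambda}+o(1))+\lambda\|u_n\|_{L^2}^2$ stays bounded (this also shows $S_{s,\lambda}$ is finite, and uses only $\lambda>0$). Passing to a subsequence, $u_n\rightharpoonup u$ in $X_0$, and by the compact embedding $X_0\hookrightarrow\hookrightarrow L^2(\Omega)$ we get $u_n\to u$ in $L^2$ and a.e.; in particular $\|u_n\|_{L^2}^2\to\|u\|_{L^2}^2$. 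Writing $v_n=u_n-u$, the Hilbert structure of $[\cdot]$ gives the orthogonal splitting $[u_n]^2=[u]^2+[v_n]^2+o(1)$ (the cross term vanishes by weak convergence), while the Brezis–Lieb lemma gives $1=\|u\|_{L^{Q^*_s}}^{Q^*_s}+\|v_n\|_{L^{Q^*_s}}^{Q^*_s}+o(1)$. Setting $a=\|u\|_{L^{Q^*_s}}^{Q^*_s}$ and $b=\lim_n\|v_n\|_{L^{Q^*_s}}^{Q^*_s}$ along a further subsequence, we have $a+b=1$ with $a,b\in[0,1]$.

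The main obstacle is then to rule out the split $b>0$, i.e. the escape of mass permitted by the non-compact critical embedding. Applying the definition of $S_{s,\lambda}$ to $u$ and the Sobolev inequality $[v_n]^2\ge S_s\|v_n\|_{L^{Q^*_s}}^2$ to $v_n$ (recall $\|v_n\|_{L^2}\to0$), the decomposition yields
\[
S_{s,\lambda}\ \ge\ S_{s,\lambda}\,a^{2/Q^*_s}+S_s\,b^{2/Q^*_s},\qquad a+b=1 .
\]
Writing $p=2/Q^*_s\in(0,1)$, the subadditivity $a^p+b^p\ge(a+b)^p=1$ together with $S_s>S_{s,\lambda}$ forces $b=0$: the right-hand side equals $S_{s,\lambda}(a^p+b^p)+(S_s-S_{s,\lambda})b^p\ge S_{s,\lambda}+(S_s-S_{s,\lambda})b^p$, which strictly exceeds $S_{s,\lambda}$ once $b>0$ when $S_{s,\lambda}\ge0$; and when $S_{s,\lambda}<0$ the bound $a^p\ge1-b^p$ gives the same conclusion, so $b=0$ for every $\lambda>0$. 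Hence $a=1$, that is $\|u\|_{L^{Q^*_s}(\Omega)}=1$, and the chain $S_{s,\lambda}=[u]^2-\lambda\|u\|_{L^2}^2+\lim_n[v_n]^2\ge[u]^2-\lambda\|u\|_{L^2}^2\ge S_{s,\lambda}$ collapses to equalities, so $u$ attains $S_{s,\lambda}$. The decisive ingredient throughout is the strict inequality $S_{s,\lambda}<S_s$, which is exactly what promotes the weak limit $u$ to a genuine minimizer.
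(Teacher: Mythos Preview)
Your proof is correct and follows essentially the same strategy as the paper: the strict inequality $S_{s,\lambda}<S_s$ is obtained by plugging the concentrating profiles $u_\varepsilon$ into the Rayleigh quotient and invoking Lemmas~\ref{lem6} and~\ref{lem7}, and attainment is proved via a minimizing sequence, the Brezis--Lieb splitting, and the Sobolev inequality applied to the remainder $v_n$. The only notable difference is organizational---the paper first shows $u\neq0$ from $S_s\le[u_n]^2$ and then multiplies the subadditivity inequality by $S_{s,\lambda}$ (tacitly using $S_{s,\lambda}\ge0$), whereas you phrase the endgame as the dichotomy $a+b=1$ and handle both signs of $S_{s,\lambda}$ explicitly, which is a slightly cleaner and more general packaging of the same idea.
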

\begin{proof}
Let $$S_{s, \lambda}(u)=\frac{\iint\limits_{\mathcal{S}} \frac{|u(\xi) - u(\eta)|^2}{|\eta^{-1} \circ \xi|^{Q+2s}} d\xi d\eta - \lambda \int\limits_{\mathbb{H}^N} |u(\xi)|^2  d\xi }{\left( \int\limits_{\mathbb{H}^N} |u(\xi)|^{Q^*_s} d \xi \right)^{2/Q^*_s}}.$$
On applying Lemma \ref{lem6} and Lemma \ref{lem7},
    \begin{align*}
 S_{s, \lambda}(u_\varepsilon) &\leq \frac{S_s^{\frac{Q}{2s}} + O(\varepsilon^{Q-2s}) - \lambda C_s \varepsilon^{2s} }{\left(S_s^{\frac{Q}{2s}} + O(\varepsilon^{Q})\right)^{\frac{2}{Q^*_s}}} \\
&\leq S_s + \varepsilon^{2s} (O(\varepsilon^{Q-4s}) - \lambda C_s) \\ 
&< S_s.
\end{align*}
    Let $\{u_n\}$ be a minimizing sequence i.e. $\|u_n\|_{L^{Q^*_s}(\Omega)} = 1$ and $S_{s, \lambda} = [u_n]^2 - \lambda\|u_n\|^2_{L^2(\Omega)}.$ Since $\{u_n\}$ is bounded in $X_0,$
    \begin{align*}
        u_n  &\rightharpoonup u \text{ in } X_0, \\
        u_n  &\to u \text{ in } L^2(\Omega), \\
        u_n(\xi)  &\to u(\xi) \text{ a.e. on } \Omega.
    \end{align*}
    Set $v_n = u_n - u,$ so that $v_n \rightharpoonup 0$ in $X_0.$
    By definition, we have $S_s \leq [u_n]^2.$ This implies that $S_s - \lambda\|u\|_{L^2(\Omega)}^2 \leq S_{s, \lambda} + o(1).$ Consequently, $0 < S_s - S_{s, \lambda} \leq \lambda\|u\|_{L^2(\Omega)}^2.$

\noindent By Brezis-Lieb lemma,
\begin{align*}
\|u+v_{n}\|_{L^{Q^*_s}(\Omega)}^{Q_{s}^{*}} &= \|u\|_{L^{Q^*_s}(\Omega)}^{Q_{s}^{*}}+\|v_n\|_{L^{Q^*_s}(\Omega)}^{Q_{s}^{*}}+o(1)\\
1&=\| u\|_{L^{Q^*_s}(\Omega)}^{Q_{s}^{*}}+\| v_n\|_{L^{Q^*_s}(\Omega)}^{Q_{s}^{*}}+o(1) \\
1 &\leq\|u\|_{L^{Q^*_s}(\Omega)}^{2}+\|v_{n}\|_{L^{Q^*_s}(\Omega)}^{2}+o(1) \\
1 &\leq\|u\|_{L^{Q^*_s}(\Omega)}^{2}+\frac{1}{S_s}[v_{n}]^{2}+o(1).
\end{align*}
Again by Brezis-Lieb lemma,
\[\left[u_n\right]^{2}-\left[v_n\right]^{2}=[u]^{2}+o(1).\]
\begin{align*}
S_{s,\lambda}&=[u]^{2}+\left[v_n\right]^{2}-\lambda \| u \|_{L^2(\Omega)}^{2}+o(1) \\
1 &\leq\|u\|_{L^{Q^*_s}(\Omega)}^{2}+\frac{1}{S_s}\left[v_n\right]^{2}+o(1) \\
S_{s,\lambda} &\leq S_{s,\lambda}\|u\|_{L^{Q^*_s}(\Omega)}^{2}+\frac{S_{s,\lambda}}{S_s}\left[v_n\right]^{2}+o(1) \\
S_{s,\lambda} &\leq S_{s,\lambda}\|u\|_{L^{Q^*_s}(\Omega)}^{2}+\left[v_n\right]^{2}+o(1) \\
S_{s,\lambda} &\leq S_{s,\lambda}\|u\|_{L^{Q^*_s}(\Omega)}^{2}+S_{s,\lambda}-[u]^{2}+\lambda\|u\|_{L^2(\Omega)}^{2}+ o(1).\\
\end{align*}
Hence,
$$
\frac{[u]^{2}-\lambda\|u\|_{L^2(\Omega)}^{2}}{\|u\|_{L^{Q^*_s}(\Omega)}^{2}} \leq S_{s,\lambda}.
$$
This shows that the infimum is attained.
\end{proof}

We now proceed to prove the main result of this article:

\begin{proof}[Proof of Theorem \ref{thm1}]
    Let $u \in X_0$ be as in Lemma \ref{lem3.9}, that is,
$\|u\|_{L^{Q^*_s}(\Omega)} = 1$ and  $[u]^{2}-\lambda\|u\|_{L^2(\Omega)}^{2}= S_{s,\lambda}.$ We may as well assume that $u \geq 0$ (otherwise we replace $u$ by $|u|$). Since
$u$ is a minimizer, we obtain a Lagrange multiplier $\mu \in \mathbb{R}$ such that
\[(-\Delta_\mathbb{H})^su - \lambda u = \mu |u|^{Q^*_s -2}u . \]
In fact, $\mu = S_{s, \lambda}$ and $S_{s, \lambda} > 0$ since $\lambda < \lambda_1$. It follows that $S_{s,\lambda}^{\frac{1}{{Q_{s}^{*}}-2}}u$ satisfies the equation \eqref{eqn1.1}. Note that $u > 0$ on $\mathbb{H}^N,$ by the strong maximum principle\cite[Theorem 6]{ghosh2024compact}.
\end{proof}

\end{document}